\definecolor{cobalt}{RGB}{61,89,171}
\newcommand{\exc}{\mathsf{E}}
\newcommand{\del}{\partial}
\newcommand{\delbar}{\overline\del}
\newcommand{\Z}{\mathbb{Z}}
\newcommand{\C}{\mathbb{C}}
\newcommand{\op}{\operatorname}
\newcommand{\iso}{isomorphism}
\newcommand{\ra}{\rightarrow}
\newcommand{\lra}{\longrightarrow}
\newcommand{\st}{such that}
\newcommand{\str}{structure}
\newcommand{\referenza}{}
\theoremstyle{plain}
\newtheorem{theorem}{Theorem}
\newtheorem*{theorem*}{Theorem \referenza}
\newtheorem{proposition}[theorem]{Proposition}
\newtheorem*{proposition*}{Proposition \referenza}
\newtheorem{lemma}[theorem]{Lemma}
\newtheorem{corollary}[theorem]{Corollary}
\theoremstyle{definition}
\newtheorem{definition}[theorem]{Definition}
\newtheorem{example}[theorem]{Example}
\theoremstyle{remark}
\newtheorem{remark}[theorem]{Remark}
\newtheorem{question}[theorem]{Question}
\begin{document}

\title[Dolbeault cohomology for bimeromorphisms]{Note on Dolbeault cohomology and Hodge structures up to bimeromorphisms}

\author[D. Angella]{Daniele Angella}
\address[Daniele Angella]{
Dipartimento di Matematica e Informatica ``Ulisse Dini''\\
Universit\`a degli Studi di Firenze\\
viale Morgagni 67/a\\
50134 Firenze, Italy
}
\email{daniele.angella@gmail.com}
\email{daniele.angella@unifi.it}

\author[T. Suwa]{Tatsuo Suwa}
\address[Tatsuo Suwa]{
Department of Mathematics\\
Hokkaido University\\
Sapporo 060-0810, Japan
}
\email{tsuwa@sci.hokudai.ac.jp}

\author[N. Tardini]{Nicoletta Tardini}
\address[Nicoletta Tardini]{
Dipartimento di Matematica e Informatica ``Ulisse Dini''\\
Universit\`a degli Studi di Firenze\\
viale Morgagni 67/a\\
50134 Firenze, Italy
}
\email{nicoletta.tardini@gmail.com}

\curraddr{
Dipartimento di Scienze Matematiche, Fisiche e Informatiche\\
Unit\`a di Matematica e Informatica\\
Universit\`a degli Studi di Parma\\
Parco Area delle Scienze 53/A\\
43124 Parma, Italy
}

\author[A. Tomassini]{Adriano Tomassini}
\address[Adriano Tomassini]{
Dipartimento di Scienze Matematiche, Fisiche e Informatiche\\
Unit\`a di Matematica e Informatica\\
Universit\`a degli Studi di Parma\\
Parco Area delle Scienze 53/A\\
43124 Parma, Italy
}
\email{adriano.tomassini@unipr.it}

\keywords{complex manifold, non-K\"ahler geometry, $\partial\overline\partial$-Lemma, Hodge decomposition, modification, blow-up, Dolbeault cohomology, orbifold}
\thanks{During the preparation of this work, the first-named author has been supported by the Project SIR2014 ``Analytic aspects in complex and hypercomplex geometry'' (code RBSI14DYEB), by the Projects PRIN ``Variet\`a reali e complesse: geometria, topologia e analisi armonica' and PRIN2017 ``Real and Complex Manifolds: Topology, Geometry and holomorphic dynamics'' (code 2017JZ2SW5), and by GNSAGA of INdAM.
The second-named author is supported by the JSPS grant no. 16K05116.
The third-named author is supported by Project PRIN ``Variet\`a reali e complesse: geometria, topologia e analisi armonica'', by SIR2014 project RBSI14DYEB ``Analytic aspects in complex and hypercomplex geometry'', and by GNSAGA of INdAM.
The fourth-named author is supported by Project PRIN ``Variet\`a reali e complesse: geometria, topologia e analisi armonica'' and by GNSAGA of INdAM}
\subjclass[2010]{32Q99, 32C35, 32S45}

\date{\today}

\begin{abstract}
We construct a simply-connected compact complex non-K\"ahler manifold satisfying the $\partial\overline\partial$-Lemma, and endowed with a balanced metric.
To this aim, we were initially aimed at investigating the stability of the property of satisfying the $\partial\overline\partial$-Lemma under modifications of compact complex manifolds and orbifolds.
This question has been recently addressed and answered in \cite{rao-yang-yang, yang-yang, stelzig-blowup, stelzig-doublecomplex} with different techniques.
Here, we provide a different approach using \v{C}ech cohomology theory to study the Dolbeault cohomology of the blow-up $\tilde X_Z$ of a compact complex manifold $X$ along a submanifold $Z$ admitting a holomorphically contractible neighbourhood.
\end{abstract}

\maketitle

\section*{Introduction}

The {\em $\partial\overline\partial$-Lemma} is a strong cohomological decomposition property defined for complex manifolds, which is satisfied for example by algebraic projective manifolds and, more generally, by compact K\"ahler manifolds. The property is closely related to the fact that the Dolbeault cohomology provides a Hodge structure on the de Rham cohomology (cf. Subsection \ref{ss-deldelbar} below).

This property yields also strong topological obstructions: the real homotopy type of a compact complex manifold satisfying the $\partial\overline\partial$-Lemma is a formal consequence of its cohomology ring \cite{deligne-griffiths-morgan-sullivan}.
Complex non-K\"ahler manifolds usually do not satisfy the $\partial\overline\partial$-Lemma: for example, it is never satisfied by compact non-tori nilmanifolds \cite{hasegawa}. On the other hand, some examples of compact complex non-K\"ahler manifolds satisfying the $\partial\overline\partial$-Lemma are provided by Moi\v{s}hezon manifolds and manifolds in class $\mathcal{C}$ of Fujiki thanks to \cite[Theorem 5.22]{deligne-griffiths-morgan-sullivan}, see \cite{hironaka} for a concrete example.
By the results contained in \cite[Corollary 3.13]{campana}, \cite[Theorem 1]{lebrun-poon} and thanks to the stability property of the $\partial\overline\partial$-Lemma for small deformations \cite[Proposition 9.21]{voisin-1}, \cite[Theorem 5.12]{wu} one can produce examples of compact complex manifolds satisfying the
$\partial\overline\partial$-Lemma and not bimeromorphic to K\"ahler manifolds. Other examples of this kind can be found among solvmanifolds \cite{angella-kasuya-AGAG, angella-kasuya-NWEJM, kasuya-JGP}; moreover other examples are provided by Clemens manifolds \cite{friedman, friedman-arxiv1708}, which are constructed by combining modifications and deformations.

The main aim of this note is to construct a simply-connected compact complex non-K\"ahler manifold satisfying the $\partial\overline\partial$-Lemma.

The theorem in \cite[Theorem 5.22]{deligne-griffiths-morgan-sullivan} states that, for a modification $\tilde X\to X$ of compact complex manifolds, the property of $\partial\overline\partial$-Lemma is preserved from $\tilde X$ to $X$. So, it is natural to ask whether it is in fact an invariant property by modifications. This is true, for example, for compact complex surfaces, thanks to the topological Lamari's and Buchdahl's criterion \cite{lamari, buchdahl}.
Note that, in higher dimension, the K\"ahler property is not stable under modifications; but there are weaker metric properties that are, for example the {\em balanced} condition in the sense of Michelsohn \cite[Corollary 5.7]{alessandrini-bassanelli} or the {\em strongly-Gauduchon} condition in the sense of Popovici \cite[Theorem 1.3]{popovici-JGA}. In fact, it is conjectured that the metric balanced condition and the cohomological $\partial\overline\partial$-Lemma property are strictly related to each other, see for example \cite[Conjecture 6.1]{popovici}, see also \cite{tosatti-weinkove-Crelle, popovici-BSMF}; and this provides another motivation for the above question.

\medskip

In this note, we deal with the Dolbeault cohomology of the blow-up along submanifolds. The strategy we follow is sheaf-theoretic, more precisely \v{C}ech-cohomological, in the spirit of \cite{suwa-ASPM}.
The de Rham case in the K\"ahler context is considered in \cite[Theorem 7.31]{voisin-1}.
For our argument, we need to assume that {\em the centre admits a holomorphically contractible neighbourhood} (this is clearly satisfied when blowing-up at a point, see also the explicit computations in Example \ref{example:blowup-point}) and another technical assumption \eqref{eq:technical} concerning the kernel and images of certain morphisms.
We can then deduce that:

\renewcommand{\referenza}{\ref{thm:blowup}}
\begin{theorem*}
Let $X$ be a compact complex manifold and $Z$ a closed submanifold of $X$.
If both $X$ and the centre $Z$ admits a Hodge structure {\rm (}in the sense of Definition~\ref{defH}{\rm)}, then the same holds for the blow-up $\mathrm{Bl}_ZX$ of $X$ along $Z$, provided that $Z$ admits a holomorphically contractible neighbourhood and the technical assumption \eqref{eq:technical} holds.
\end{theorem*}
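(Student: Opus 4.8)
The plan is to reduce the statement to two ingredients: a blow-up formula for Dolbeault cohomology, proven by the \v{C}ech--Dolbeault method, and the characterization of the $\partial\overline\partial$-Lemma recalled in the introduction, namely that it is equivalent to the degeneration of the Fr\"olicher spectral sequence at the first page together with the fact that the induced filtration defines a pure, functorial Hodge structure on de Rham cohomology. Write $\pi\colon \tilde X=\mathrm{Bl}_Z X\to X$ for the blow-up, $c=\mathrm{codim}_X Z$, and $E=\pi^{-1}(Z)=\mathbb{P}(N_{Z/X})$ for the exceptional divisor. The goal is the bigraded isomorphism
\[
H^{p,q}_{\overline\partial}(\tilde X)\;\cong\;H^{p,q}_{\overline\partial}(X)\ \oplus\ \bigoplus_{i=1}^{c-1} H^{p-i,q-i}_{\overline\partial}(Z),
\]
compatible with conjugation and with the Hodge filtrations, after which the hypotheses on $X$ and $Z$ are fed into the resulting numerical and structural criterion.

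To establish this formula I would work with the \v{C}ech--Dolbeault bicomplex for the two-set cover $\{U_0,U_1\}$ of $X$, where $U_0=X\setminus Z$ and $U_1=W$ is the holomorphically contractible neighbourhood of $Z$ furnished by hypothesis, together with its pullback $\{\pi^{-1}U_0,\pi^{-1}U_1\}$ on $\tilde X$. Since $\pi$ restricts to a biholomorphism $\tilde X\setminus E\cong X\setminus Z$, the two bicomplexes agree except on the distinguished set, so the comparison is local over $W$. There the contraction gives $H^{p,q}_{\overline\partial}(W)\cong H^{p,q}_{\overline\partial}(Z)$, while $\pi^{-1}(W)$ retracts onto $E$, so that $H^{p,q}_{\overline\partial}(\pi^{-1}W)\cong H^{p,q}_{\overline\partial}(\mathbb{P}(N_{Z/X}))$; the projective-bundle (Leray--Hirsch) formula for Dolbeault cohomology then yields $\bigoplus_{i=0}^{c-1}H^{p-i,q-i}_{\overline\partial}(Z)\,\xi^{i}$, with $\xi$ the hyperplane class of $E$. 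Comparing the Mayer--Vietoris sequences of the two covers, the $i=0$ summand cancels against the contribution of $W\simeq Z$ coming from $X$, leaving exactly the $i=1,\dots,c-1$ terms.

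The main obstacle is precisely this comparison: a priori the long exact sequences only trap the Dolbeault groups of $\tilde X$ between a kernel and a cokernel, and one must show that the relevant connecting and restriction maps align so that the sequences break into short exact sequences splitting bigraded-wise. This is where the technical assumption \eqref{eq:technical} on the kernels of the pertinent morphisms enters; I would use it to identify these kernels and thereby produce the direct-sum decomposition, checking throughout that all identifications are induced by pullback and Gysin morphisms and hence respect the bigrading and the real structure.

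Granting the formula, the conclusion is short. Summing over $p+q=k$ and using that $X$ and the compact submanifold $Z$ satisfy the $\partial\overline\partial$-Lemma, hence $\sum_{p+q=k}h^{p,q}_{\overline\partial}(X)=b_k(X)$ and $\sum_{p+q=m}h^{p,q}_{\overline\partial}(Z)=b_m(Z)$, together with the classical topological blow-up formula $b_k(\tilde X)=b_k(X)+\sum_{i=1}^{c-1}b_{k-2i}(Z)$, gives $\sum_{p+q=k}h^{p,q}_{\overline\partial}(\tilde X)=b_k(\tilde X)$; since the Fr\"olicher inequality $\sum_{p+q=k}h^{p,q}_{\overline\partial}\ge b_k$ always holds, equality forces degeneration at the first page for $\tilde X$. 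Finally, because the decomposition is realized through morphisms of Hodge structures (the pullback $\pi^{*}$ and the Gysin maps, the latter of type $(i,i)$), because $X$ and $Z$ carry pure Hodge structures on their de Rham cohomology by hypothesis, and because $\xi$ is a real class of type $(1,1)$, conjugation exchanges the $(p,q)$ and $(q,p)$ summands; thus $H^{k}_{dR}(\tilde X;\mathbb{C})$ inherits a pure Hodge structure of weight $k$. By the characterization recalled above, this is equivalent to the $\partial\overline\partial$-Lemma for $\tilde X$.
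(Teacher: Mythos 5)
Your overall architecture (localize the comparison between $X$ and $\tilde X$ near $Z$, use the projective-bundle formula for $E$, then conclude via Fr\"olicher degeneration and Hodge symmetry) is in the same spirit as the paper, but there is a genuine gap at the central step: you assert that the holomorphic contraction gives $H^{p,q}_{\overline\partial}(W)\cong H^{p,q}_{\overline\partial}(Z)$ and that $H^{p,q}_{\overline\partial}(\pi^{-1}W)\cong H^{p,q}_{\overline\partial}(E)$ because $\pi^{-1}(W)$ ``retracts onto $E$''. Dolbeault cohomology is not a homotopy invariant, and $W$, $\pi^{-1}(W)$, and the intersections $W\setminus Z$, $\pi^{-1}(W)\setminus E$ appearing in your Mayer--Vietoris sequences are non-compact open complex manifolds whose Dolbeault cohomology is in general enormous and uncontrolled. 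A holomorphic retraction $r\colon W\to Z$ only gives a \emph{split injection} $r^{*}\colon H^{p,q}_{\overline\partial}(Z)\to H^{p,q}_{\overline\partial}(W)$, never an isomorphism in general. This is precisely the difficulty the paper is built to avoid: instead of Mayer--Vietoris it uses the \emph{relative} \v{C}ech--Dolbeault long exact sequence \eqref{lexcd} for the pairs $(X,X\setminus Z)$ and $(\tilde X,\tilde X\setminus E)$, and replaces your ``$H(W)\cong H(Z)$'' by the Dolbeault--Thom morphism $\bar T_{Z}\colon H^{p-k,q-k}_{\overline\partial}(Z)\to H^{p,q}_{\bar D}(X,X\setminus Z)$, which is only a section of the split surjection $\bar r_{*}$ (unlike the de~Rham case, where the Thom map is an isomorphism). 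The possibly nonzero kernels $\ker\bar r_{*}$ and $\ker\bar{\tilde r}_{*}$ are exactly what the technical assumption \eqref{eq:technical} is about: it is not a bookkeeping device to ``align connecting maps'' in a Mayer--Vietoris comparison, but the substitute for the isomorphisms you are assuming without justification.

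Concretely, to repair your argument you would have to either (a) prove that for your particular $W$ the pullbacks $r^{*}$ and $\tilde r^{*}$ are isomorphisms on Dolbeault cohomology (false in general, and not implied by \eqref{htnbd}), or (b) switch to the relative groups $H^{p,q}_{\bar D}(X,X\setminus Z)$ and $H^{p,q}_{\bar D}(\tilde X,\tilde X\setminus E)$, compare them via $\tau^{*}\bar\Psi_{Z}=\bar\Psi_{E}\smallsmile\tilde r^{*}a^{k-1}(Q)$ (Lemma \ref{thpbD}), and then invoke \eqref{eq:technical} together with the injectivity results of \cite{tardini} and \cite{wells} to extract from the five-term diagram \eqref{CDD2} the pushout (equivalently, under the projective-bundle formula, your direct-sum) description of $H^{p,q}_{\overline\partial}(\tilde X)$. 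Your concluding step --- counting Hodge numbers against $b_{k}(\tilde X)$ and checking conjugation symmetry through Gysin maps of type $(i,i)$ --- is fine once the bigraded decomposition is actually established, and is a legitimate alternative to the paper's argument via universality of the pushout of Hodge structures; but as written the decomposition itself is not proved.
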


Along the way we give explicit expressions for the de Rham and Dolbeault cohomologies of
$\mathrm{Bl}_ZX$ (see Propositions \ref{isodR} and \ref{isoDol}).

Hopefully, a further study of the cohomological properties of submanifolds (see Question \ref{question:submanifold}) and a deeper use of techniques as the MacPherson's deformation to the normal cone (see Question \ref{question:normal-cone}), along with the Weak Factorization Theorem for bimeromorphic maps in the complex-analytic category \cite[Theorem 0.3.1]{akmw}, \cite{wlodarczyk}, may allow to use the above techniques to prove in full generality the stability of the $\partial\overline\partial$-Lemma under modifications, see Remark \ref{remark:corollary}.

During the preparation of this work, several other attempts to solve the same problems appeared \cite{rao-yang-yang, yang-yang, stelzig-blowup}, using different techniques. In particular, the work by Jonas Stelzig \cite{stelzig-thesis} finally ties up the problem, as far as now:

\begin{theorem}[{\cite[Theorem 8]{stelzig-blowup}, \cite[Corollary 25]{stelzig-doublecomplex}}]\label{thm:stelzig}
The $\partial\overline\partial$-Lemma property is a bimeromorphic invariant if and only if it is invariant by restriction.
\end{theorem}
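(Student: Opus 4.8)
The plan is to reduce the statement to a single structural input, a \emph{blow-up formula at the level of the Dolbeault double complex}, and then to run two short formal arguments. Throughout, ``invariant by restriction'' is read as: the $\partial\overline\partial$-Lemma descends from a compact complex manifold $X$ to each of its closed complex submanifolds. Write $A_X^{\bullet,\bullet}$ for the double complex $(\mathscr{A}^{\bullet,\bullet}(X),\partial,\overline\partial)$ and recall Stelzig's reformulation: $X$ satisfies the $\partial\overline\partial$-Lemma if and only if, in the decomposition of $A_X^{\bullet,\bullet}$ into indecomposable zigzag subcomplexes, only \emph{squares} and \emph{dots} occur. The key geometric ingredient I would isolate is the refined blow-up formula of \cite{stelzig-doublecomplex}: for the blow-up $\pi\colon\mathrm{Bl}_ZX\to X$ along a submanifold $Z$ of complex codimension $c$, there is an isomorphism, in the homotopy category of double complexes,
\begin{equation*}
A_{\mathrm{Bl}_ZX}^{\bullet,\bullet}\ \simeq\ A_X^{\bullet,\bullet}\ \oplus\ \bigoplus_{k=1}^{c-1}A_Z^{\bullet,\bullet}(k,k),
\end{equation*}
where $A_Z^{\bullet,\bullet}(k,k)$ denotes the double complex of $Z$ placed in bidegree shifted by $(k,k)$. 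The second, purely formal, ingredient is that the class of double complexes whose zigzag decomposition consists only of squares and dots is stable under bidegree shifts and under direct sums, and that by uniqueness of the zigzag decomposition a direct sum lies in this class if and only if each summand does. Combining the two, for $c\ge 2$ one obtains the crucial equivalence that $\mathrm{Bl}_ZX$ satisfies the $\partial\overline\partial$-Lemma if and only if both $X$ and $Z$ do, while for $c=1$ the map $\pi$ is an isomorphism and there is nothing to prove.

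Granting this equivalence, for the implication ``invariance by restriction $\Rightarrow$ bimeromorphic invariance'' I would invoke the Weak Factorization Theorem \cite[Theorem 0.3.1]{akmw}, \cite{wlodarczyk} to write an arbitrary bimeromorphism between compact complex manifolds as a finite chain of blow-ups and blow-downs with smooth centres, thereby reducing to a single blow-up $\mathrm{Bl}_ZX\to X$. If the property is invariant by restriction, then $Z\subset X$ satisfies the $\partial\overline\partial$-Lemma whenever $X$ does, so the condition on $Z$ in the equivalence above is automatic and $\mathrm{Bl}_ZX$ satisfies the $\partial\overline\partial$-Lemma if and only if $X$ does. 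As each elementary step of the factorization preserves the property in both directions, so does the whole chain, which is bimeromorphic invariance.

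For the converse, ``bimeromorphic invariance $\Rightarrow$ invariance by restriction'', let $Z\subset X$ be any closed submanifold with $X$ satisfying the $\partial\overline\partial$-Lemma. To handle divisors uniformly, I would pass to $X':=X\times\mathbb{P}^1$, which again satisfies the $\partial\overline\partial$-Lemma by the Künneth-type identification $A_{X'}^{\bullet,\bullet}\simeq A_X^{\bullet,\bullet}\otimes A_{\mathbb{P}^1}^{\bullet,\bullet}$, the closure of the relevant class under tensor products, and the fact that $\mathbb{P}^1$ is Kähler. Setting $Z':=Z\times\{0\}\cong Z$, which has codimension $\ge 2$ in $X'$, the blow-up $\mathrm{Bl}_{Z'}X'$ is bimeromorphic to $X'$ and hence satisfies the $\partial\overline\partial$-Lemma by hypothesis; the equivalence above then forces $Z'\cong Z$ to satisfy it, as desired.

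The main obstacle is the refined blow-up formula itself. The classical blow-up formula only supplies an \emph{abstract isomorphism of Dolbeault cohomology groups}, which is not enough: the $\partial\overline\partial$-Lemma is invisible to the mere dimensions of the cohomology groups and is governed instead by the finer structure of the double complex up to quasi-isomorphism, that is, by which zigzags occur. One must therefore realise the decomposition at the level of the double complexes, controlling the comparison maps well enough to split off the shifted copies of $A_Z^{\bullet,\bullet}$ as genuine homotopy direct summands; the geometric input that produces these copies is the cohomology of the exceptional divisor $\mathbb{P}(N_{Z/X})$, computed by a Leray--Hirsch argument over $Z$. This is exactly the content accessed in the present note through \v{C}ech cohomology under the hypotheses of Theorem \ref{thm:blowup}, namely the holomorphically contractible neighbourhood and the technical assumption \eqref{eq:technical}, and which Stelzig establishes unconditionally through the homological algebra of double complexes in \cite{stelzig-doublecomplex}.
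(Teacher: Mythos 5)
Your proposal is correct in outline, but note that the paper does not actually prove Theorem \ref{thm:stelzig}: it is quoted from \cite{stelzig-blowup} and \cite{stelzig-doublecomplex}, and what you have written is essentially a reconstruction of Stelzig's argument rather than of anything carried out in this note. The ingredients you assemble are the right ones --- the blow-up formula $A^{\bullet,\bullet}_{\mathrm{Bl}_ZX}\simeq_1 A^{\bullet,\bullet}_X\oplus\bigoplus_{k=1}^{c-1}A^{\bullet,\bullet}_Z(k,k)$ (which the authors themselves invoke, in exactly this form, in the proof of Theorem \ref{thm:orbifolds}), the zigzag characterization of the $\partial\overline\partial$-Lemma together with uniqueness of the zigzag decomposition to pass the property to direct summands, weak factorization \cite{akmw}, \cite{wlodarczyk} for one implication, and the passage to $X\times\mathbb{P}^1$ to force codimension at least $2$ in the other. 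One precision: Stelzig's formula is an $E_1$-isomorphism, not an isomorphism in the homotopy category of double complexes; the weaker statement suffices because the multiplicities of non-square zigzags are $E_1$-invariants, but you should not claim more than is established. The contrast with the paper's own work is instructive. The analogous result actually proved here, Theorem \ref{thm:blowup}, is obtained by a genuinely different route: relative \v{C}ech--Dolbeault cohomology, the $\overline\partial$-Thom class and Thom morphism, the long exact sequences of the pairs $(X,X\setminus Z)$ and $(\tilde X,\tilde X\setminus E)$, and a five-lemma-type pushout argument, with the Dolbeault cohomology of the exceptional divisor controlled by the Borel spectral sequence (Proposition \ref{prop:dolbeault-projectivized}). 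That route works only under the additional hypotheses \eqref{htnbd} and \eqref{eq:technical}, because without a holomorphic retraction the $\overline\partial$-Thom morphism need not split the relative Dolbeault cohomology and the commutativity and exactness of the relevant ladder are not automatic; in exchange it gives explicit geometric descriptions of the Gysin maps and localized classes. Your (Stelzig's) double-complex route is unconditional and yields the clean biconditional of Theorem \ref{thm:stelzig}, at the price of establishing the blow-up formula at the level of double complexes up to $E_1$-isomorphism, which is the one genuinely hard step your proposal defers entirely to \cite{stelzig-doublecomplex}.
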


Even if Stelzig's theorem is clearly stronger than our Theorem \ref{thm:blowup}, we think that our argument may be interesting and useful in providing a broader point of view for understanding (\v{C}ech-)Dolbeault cohomology.

\medskip

The second and main aim of this note is to construct new explicit examples of compact complex manifolds satisfying the $\partial\overline\partial$-Lemma: in particular, we provide a {\em simply-connected example}, see Example \ref{example:global-quotient-iwasawa}.
To this aim, we need to work with {\em orbifolds} in the sense of Satake \cite{satake}, and their desingularizations. We take advantage of Stelzig's general results, see Theorem \ref{thm:orbifolds}.
The construction of Example \ref{example:global-quotient-iwasawa} goes as follows, see {\itshape e.g.}  \cite{fernandez-munoz, bazzoni-fernandez-munoz}: we start from a manifold isomorphic to the Iwasawa manifold, which does not satisfy the $\partial\overline\partial$-Lemma; then we quotient it by a finite group of automorphisms; and then we resolve its singularities.
Finally, by Theorem \ref{thm:orbifolds}, we get simply-connected examples of complex manifolds satisfying the $\partial\overline\partial$-Lemma:

\renewcommand{\referenza}{\ref{thm:examples-simply-connected}}
\begin{theorem*}
There exist a {\em simply-connected} compact complex non-K\"ahler manifold, (not even in class $\mathcal C$ of Fujiki,) that satisfy the $\partial\overline\partial$-Lemma. Our example admits a balanced metric.
\end{theorem*}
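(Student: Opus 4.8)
The plan is to construct the desired simply-connected examples explicitly by following the recipe sketched in the introduction, namely by resolving singularities of a quotient orbifold via the orbifold blow-up result (Theorem~\ref{thm:orbifolds}). First I would start from a well-understood compact complex manifold — the completely-solvable Nakamura manifold (which satisfies the $\partial\overline\partial$-Lemma) for one example, and a manifold isomorphic to the Iwasawa manifold for the other — and exhibit a finite group $G$ of holomorphic automorphisms acting on it. The key design requirement on $G$ is twofold: the action must produce a \emph{simply-connected} quotient orbifold $X/G$, and the fixed-point loci (hence the orbifold singularities) must be of a sufficiently simple type that they admit holomorphically contractible neighbourhoods and satisfy the technical assumption \eqref{eq:technical}, so that Theorem~\ref{thm:orbifolds} applies to produce a smooth resolution $\tilde X$ still satisfying the $\partial\overline\partial$-Lemma. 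Here I would follow the constructions of \cite{fernandez-munoz, bazzoni-fernandez-munoz}, choosing the group action so that the singular points are isolated and locally of the form $\mathbb{C}^n/G$ with the centres $Z$ being points (the cleanest case, covered by Example~\ref{example:blowup-point}).

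The second step is to verify that the resulting resolution $\tilde X$ is genuinely simply-connected and non-K\"ahler, and in fact \emph{not} in class $\mathcal{C}$ of Fujiki. Simple-connectivity should follow from a van~Kampen-type computation: the orbifold fundamental group of $X/G$ is trivial because every loop can be contracted by passing through a fixed point, and the resolution of isolated quotient singularities by blow-ups with simply-connected exceptional divisors does not reintroduce fundamental group. Non-K\"ahlerness, and the stronger statement of not belonging to class $\mathcal{C}$, should be read off from a cohomological obstruction inherited from the building blocks: the Iwasawa and Nakamura manifolds already fail to be in class $\mathcal{C}$ (e.g.\ via their non-closed holomorphic forms or the failure of the relevant Hodge-symmetry away from the centres), and these invariants survive the finite quotient and the localized blow-up because the modification only alters cohomology near the exceptional locus.

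The third step is to produce the balanced metric. Since $\tilde X$ is obtained from $X$ by a modification (composition of blow-ups along the exceptional data of the resolution), and since the balanced condition is stable under modifications by \cite[Corollary 5.7]{alessandrini-bassanelli}, it suffices to exhibit a balanced metric on the building blocks and on the quotient, then transport it along the resolution. The Iwasawa and Nakamura manifolds carry $G$-invariant balanced metrics (these are explicit, left-invariant metrics on the underlying solvmanifold/nilmanifold), which descend to a balanced metric on the orbifold $X/G$; pulling back and patching via \cite[Corollary 5.7]{alessandrini-bassanelli} then yields a balanced metric on $\tilde X$.

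The main obstacle I expect is the \emph{simultaneous} fulfillment of all the hypotheses of Theorem~\ref{thm:orbifolds} at every singular point: one must arrange the finite group action so that each fixed-point locus is a smooth centre admitting a holomorphically contractible neighbourhood \emph{and} satisfies the technical assumption \eqref{eq:technical}, while \emph{also} keeping the quotient simply-connected and outside class $\mathcal{C}$. These constraints pull in different directions — making the action free enough to control the singularities tends to conflict with killing the fundamental group — so the delicate point is to select the group $G$ and its action precisely, which is why the concrete verification is deferred to the worked Examples~\ref{example:global-quotient-nakamura} and~\ref{example:global-quotient-iwasawa} rather than argued in the abstract.
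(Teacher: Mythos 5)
Your overall strategy coincides with the paper's: quotient the Nakamura (respectively Iwasawa) manifold by an explicit finite group of automorphisms, check the $\partial\overline\partial$-Lemma on the invariant subcomplex, resolve the singularities via Theorem~\ref{thm:orbifolds}, and obtain the balanced metric by descending an invariant Hermitian metric to the orbifold and invoking \cite{alessandrini-bassanelli}. Two of your worries are moot: Theorem~\ref{thm:orbifolds} as proved in the paper (via Stelzig's $E_1$-quasi-isomorphism) requires neither the technical assumption \eqref{eq:technical} nor the contractible-neighbourhood hypothesis, so there is no tension between controlling the singularities and killing the fundamental group; on the other hand, in the Iwasawa-based example the resolution forces a blow-up along a curve $L\subset\mathbb{CP}^2/\mathbb{Z}_3$ at the second stage, so your restriction to point centres only covers the Nakamura-based example.

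The genuine gap is in your justification of simple connectivity. The assertion that ``the orbifold fundamental group of $X/G$ is trivial because every loop can be contracted by passing through a fixed point'' is not a valid principle: the presence of fixed points only yields, by \cite[Corollary 6.3]{bredon}, an \emph{epimorphism} $\pi_1(X)\to\pi_1(X/G)$, and a finite action with fixed points can perfectly well have a non-simply-connected quotient (take $\mathbb{T}^2\times\mathbb{T}^2$ with an involution acting as $-1$ on the first factor and trivially on the second: there are plenty of fixed points, yet the quotient is $\mathbb{S}^2\times\mathbb{T}^2$). What is actually needed, and what the paper supplies, is a computation showing that the images of the generators of $\pi_1(X)$ die in the quotient: one describes $\pi_1(X)$ through the coordinate sub-surfaces $\Sigma_1,\Sigma_2,\Sigma_3$ using the torus-bundle structure of the solvmanifold, and applies the Riemann--Hurwitz formula to the branched covers $\Sigma_i\to\pi(\Sigma_i)$ to check that each image is topologically a $2$-sphere, so that each generator becomes nullhomotopic. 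This step depends delicately on the specific automorphism $\sigma$ chosen and cannot be deduced from the mere existence of fixed points; it is the one place where your outline asserts a reason rather than deferring to the worked examples, and the reason given is incorrect as stated.
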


As far as we know, these are the first explicit examples of simply-connected compact complex non-K\"ahler manifolds satisfying the $\del\delbar$-Lemma in the literature.

\bigskip

{\small
\noindent{\sl Acknowledgments.}
The authors warmly thank Giovanni Bazzoni, Carlo Collari, Hisashi Kasuya, Sheng Rao, Jonas Stelzig, Valentino Tosatti, Claire Voisin, Victor Vuletescu, Song Yang, Xiangdong Yang, Weiyi Zhang for interesting and useful discussions on many occasions.
Thanks also to the anonymous Referees for some suggestions and comments that improve the presentation of the paper.
Parts of this work have been written during the visit of the first-named author at the Hokkaido University and that of the second-named author at Universit\`a di Firenze: they would like to thank the both Departments for the warm hospitality.
}

\section{Preliminaries on \v{C}ech-Dolbeault cohomology and $\del\delbar$-Lemma}
In this Section, we recall the main definitions and results about relative
\v{C}ech-de Rham and \v{C}ech-Dolbeault cohomologies; for more details and applications we refer to \cite{suwa-book} and \cite{suwa-ASPM}. We also recall the $\del\delbar$-Lemma and some of its characterizations.

\subsection{\v{C}ech-de Rham cohomology and relative de Rham cohomology}
Let $X$ be a smooth manifold.
Let $\mathcal{U}=\{U_{0},U_{1}\}$ be an open covering of $X$ and set $U_{01}:=U_0\cap U_1$. Denoting by $A^{h}(U)$ the space of ($\mathbb C$-valued) smooth $h$-forms on an open set $U$ in $X$, we set 
\[
A^{h}(\mathcal{U}):=A^{h}(U_{0})\oplus A^{h}(U_{1})
\oplus A^{h-1}(U_{01}).
\]
The differential operator $D\colon A^{h}(\mathcal{U})\to A^{h+1}(\mathcal{U})$ defined by $D\left(\sigma_{0},\,\sigma_{1},\,\sigma_{01}\right)=\left(d\sigma_{0},\,d\sigma_{1},\,
\sigma_{1}-\sigma_{0}-d\sigma_{01}\right)$ yields a differential complex $(A^{\bullet}(\mathcal{U}),D)$: the {\em \v{C}ech-de Rham cohomology} associated to the covering $\mathcal{U}$
is then defined by $H^{\bullet}_{D}(\mathcal{U})
=\ker D/\mathrm{im}\,D$.
The 
morphism $A^{h}(X)\to A^{h}(\mathcal{U})$ given by $\omega\mapsto (\omega|_{U_{0}},\omega|_{U_{1}},0)$ induces an isomorphism in cohomology, \cite[Theorem 3.3]{suwa-book},
\[
H^{\bullet}_{dR}(X)\overset\sim\lra H^{\bullet}_{D}(\mathcal{U}),
\]
whose inverse is given by assigning to the class of $\sigma=(\sigma_{0},\sigma_{1},\sigma_{01})$ the class of the global $d$-closed form $\rho_{0}\sigma_{0}+\rho_{1}\sigma_{1}-d\rho_{0}\wedge\sigma_{01}$, where $(\rho_{0},\rho_{1})$ is a partition of unity subordinate to $\mathcal{U}$. In the above $H^{\bullet}_{dR}(X)$ denotes the de~Rham cohomology of $X$. The de Rham theorem says it is isomorphic to $H^{\bullet}(X;\C)$, the simplicial, singular or sheaf cohomology of $X$ with coefficents in $\C$.
See \cite{suwa-book} for further results, including cup product, integration on top-degree cohomology, duality.

Given a closed set $S$ in $X$, we can take $U_0:=X\setminus S$ and $U_1$ an open neighbourhood of $S$ in $X$, and the open covering $\mathcal U=\{U_0,U_1\}$. In this case, define 
$A^p(\mathcal U, U_0):=\{\,\sigma\in A^{h}(\mathcal{U})\mid\sigma_{0}=0\,\}=A^{h}(U_{1})\oplus A^{h-1}(U_{01})$. Then $\left(A^\bullet(\mathcal U, U_0),D\right)$ is a differential sub-complex 
of 
$\left(A^{\bullet}(\mathcal{U}),D\right)$. 
Let $H^{h}_{D}(\mathcal{U},U_{0})$ denote the associated
cohomology. From the short exact sequence 
\[
0\lra A^{\bullet}(\mathcal{U},U_{0})\lra A^{\bullet}(\mathcal{U})
\lra A^{\bullet}(U_{0})\lra 0,
\]
where the first map is the inclusion and the second map is the projection on the first element, we obtain a long exact sequence in cohomology
\begin{equation}\label{lexcdR}
\cdots\lra H^{h-1}_{dR}(U_{0})\overset\delta\lra H^{h}_{D}(\mathcal{U},U_{0})\overset{j^{*}}\lra H^{h}_{D}(\mathcal{U})\overset{i^{*}}\lra H^{h}_{dR}(U_{0})\lra\cdots.
\end{equation}
From this we see
 that $H^{h}_{D}(\mathcal{U},U_{0})$ is determined uniquely modulo canonical isomorphisms, independently of the choice of $U_{1}$. We denote it also by $H^{h}_D(X,X\setminus S)$ and call it the {\em relative \v{C}ech-de Rham cohomology}. We recall that {\em excision} holds: for any neighbourhood $U$ of $S$ in $X$, it holds $H^{h}_D(X,X\setminus S)\simeq H^{h}_D(U,U\setminus S)$.
In fact we have, \cite{suwa-intersection},
$$ H^{h}_D(X,X\setminus S)\simeq  H^{h}(X,X\setminus S; \mathbb C), $$
the relative cohomology of the pair $(X,X\setminus S)$.

Consider now a smooth complex vector bundle $\pi \colon E \to M$ of rank $k$ on a smooth manifold $M$. Consider the bundle $\varpi \colon \pi^*E \to E$ defined by the fibre product
$$
\SelectTips{cm}{}
 \xymatrix
 @C=.7cm
@R=.7cm
{\pi^*E \ar[r] \ar_{\varpi}[d] & E \ar^{\pi}[d] \\
E \ar[r]^{\pi} & M
} $$
and its diagonal section $s_\Delta$. The zero-set of $s_\Delta$ is the image of the zero-section of $E$, which is identified with $M$. In this
situation, the {\em Thom class} $\Psi_E \in H^{2k}_{D}(E,E\setminus M)$ of $E$ is given as the localization of the top Chern class $c^k(\pi^*E)$ by $s_\Delta$. That is: consider the covering $\mathcal W=\{W_0:=E\setminus M, W_1\}$ of $E$, where $W_1$ is a neighbourhood of $M$ in $E$; consider $\nabla_0$ a connection on $W_0$ such that $\nabla_0 s_\Delta=0$, and $\nabla_1$ a connection on $W_1$; then the Chern class $c^k(\pi^*E)$ is represented by $\left(c^k(\nabla_0),c^k(\nabla_1),c^k(\nabla_{0},\nabla_{1})\right) \in H^{2k}_D(\mathcal W)\simeq H^{2k}_{dR}(E)$, where $c^k(\nabla_{0},\nabla_{1})$ is the Bott difference form of $\nabla_0$ and $\nabla_1$; in fact, since $c^k(\nabla_0)=0$,  this defines a class $\Psi_E\in H^{2k}_{D}(E,E\setminus M)$ represented by $(\psi_1,\psi_{01}):=(c^k(\nabla_1),c^k(\nabla_{0},\nabla_{1}))$.
It turns out that the map
$$
T_E \colon H^{\bullet-2k}_{dR}(M) \stackrel{\sim}{\lra} H^\bullet_{D}(E,E\setminus M) ,
\qquad [\theta]\mapsto\Psi_E\smallsmile\pi^*[\theta]
$$
is an isomorphisms, \cite[Theorem 5.3]{suwa-book}, called the {\em Thom isomorphism}, where the cup
product $\Psi_E\smallsmile\pi^*[\theta]$ is represented by $(\psi_1\wedge \pi^*\theta, \psi_{01}\wedge\pi^*\theta)$. Its inverse is the {\em integration along the fibres}:
$$ \pi_* \colon H^\bullet_{D}(E,E\setminus M) \lra H^{\bullet-2k}_{dR}(M),
\qquad
\pi_*\left(\sigma_1,\sigma_{01}\right) = (\pi_1)_*\sigma_1 + (\pi_{01})_*\sigma_{01} ,$$
where $\pi_1$ is the restriction of $\pi$ to a bundle $T_1$ of disks of complex dimension $k$ in $W_1$, and $\pi_{01}$ is the restriction of $\pi$ to the bundle $T_{01}=-\partial T_1$ of spheres of real dimension $2k-1$ with opposite orientation. In particular, the Thom class $\Psi_E$ is characterized in $H^{2k}_{D}(E,E\setminus M)$ by the property $\pi_*\Psi_E=1$. Finally, we recall the {\em projection formula}, \cite[Ch.II, Proposition 5.1]{suwa-book}: for $\sigma\in A^p(\mathcal{W},W_{0})$, $\theta\in A^q(M)$,
$$ \pi_*(\sigma\smallsmile\pi^*\theta)=\pi_*\sigma\wedge\theta. $$

Given a closed complex submanifold $Z$, of complex codimension $k$, of a complex manifold $X$, of complex dimension $n$, we can define the Thom isomorphism and the Thom class of $Z$ as follows. Consider the normal bundle $\pi \colon N_{Z|X} \to Z$, of complex rank $k$. By the Tubular Neighbourhood Theorem, there exist neighbourhoods $U$ of $Z$ in $X$, and $W$ of $Z$ as zero section in $N_{Z|X}$, and a smooth diffeomorphism $\varphi \colon U \to W$ such that $\varphi|_Z=\mathrm{id}$. Then, setting $N=N_{Z|X}$, we get isomorphisms 
\[
H^\bullet_{D}(X,X\setminus Z)\simeq H^\bullet_D(U,U\setminus Z) \underset{\varphi^{*}}{\overset\sim\longleftarrow} H^\bullet_D(W,W\setminus Z)\simeq H^\bullet_D(N,N\setminus Z). 
\]
Define the Thom class $\Psi_Z\in H^{2k}_{D}(X,X\setminus Z)$ of $Z$ as the image of $\Psi_{N_{Z|X}}$ via the above isomorphisms, and the Thom isomorphism $T_Z \colon H^{\bullet-2k}_D(Z) \stackrel{\sim}{\to} H^\bullet_D(X,X\setminus Z)$ as $T_Z(z)=\Psi_Z\smallsmile r^*z$, where $r=\pi\circ\varphi\colon U \to Z$.

\subsection{\v{C}ech-Dolbeault cohomology} 
Let $X$ be a complex manifold
and let $A^{p,q}(U)$ be the space of smooth $(p,q)$-forms on an open set $U$ in $X$. 
Let $\mathcal{U}=\{U_{0},U_{1}\}$ be an open covering of $X$ and consider
\[
A^{p,q}(\mathcal{U}):=A^{p,q}(U_{0})\oplus A^{p,q}(U_{1})
\oplus A^{p,q-1}(U_{01}).
\]
The differential operator $\bar D\colon A^{p,q}(\mathcal{U})\to A^{p,q+1}(\mathcal{U})$ is defined on every element 
$(\xi_{0},\,\xi_{1},\,\xi_{01})\in A^{p,q}(\mathcal{U})$ by 
$$
\bar D\left(\xi_{0},\,\xi_{1},\,\xi_{01}\right)=\left(\delbar\xi_{0},\,\delbar\xi_{1},\,
\xi_{1}-\xi_{0}-\delbar\xi_{01}\right).
$$
The {\em \v{C}ech-Dolbeault cohomology} associated to the covering $\mathcal{U}$
is then defined by $H^{\bullet,\bullet}_{\bar D}(\mathcal{U})
=\ker\bar D/\mathrm{im}\,\bar D$
(see \cite{suwa-ASPM} where this definition is given for an arbitrary open covering of the manifold $X$).
The 
morphism $A^{p,q}(X)\to A^{p,q}(\mathcal{U})$ given by $\omega\mapsto (\omega|_{U_{0}}\omega|_{U_{1}},0)$ induces an isomorphism in cohomology
\[
H^{\bullet,\bullet}_{\delbar}(X)\overset\sim\lra H^{\bullet,\bullet}_{\bar D}(\mathcal{U}),
\]
where $H^{\bullet,\bullet}_{\delbar}(X)$ denotes the Dolbeault cohomology of $X$, \cite[Theorem~1.2]{suwa-ASPM}.
In particular, the definition is independent of the choice of the covering of $X$.
Moreover, the inverse map is given by assigning to the class of $\xi=(\xi_{0},\xi_{1},\xi_{01})$ the class
of the global $\delbar$-closed form $\rho_{0}\xi_{0}+\rho_{1}\xi_{1}-\delbar\rho_{0}\wedge\xi_{01}$, where $(\rho_{0},\rho_{1})$
is a partition of unity subordinate to  $\mathcal{U}$.

One can define cup product, integration on top-degree cohomology and Kodaira-Serre duality and they turn out to be compatible with the above isomorphism (cf. \cite{suwa-ASPM} for more details).

\subsection{Relative \v{C}ech-Dolbeault cohomology}
Let $S$ be a closed set in $X$. We set $U_{0}=X\setminus S$ and $U_{1}$ to be an open neighbourhood of $S$ in $X$, and we consider the associated covering $\mathcal{U}=\{U_{0},U_{1}\}$ of $X$. For any $p,q$, we set
\[
A^{p,q}(\mathcal{U},U_{0}):=\{\,\xi\in A^{p,q}(\mathcal{U})\mid\xi_{0}=0\,\}=A^{p,q}(U_{1})\oplus A^{p,q-1}(U_{01}).
\]
Then $\left(A^{p,\bullet}(\mathcal{U},U_{0}), \,\bar D\right)$ is a subcomplex of 
$\left(A^{p,\bullet}(\mathcal{U}),\,\bar D\right)$. 
Let $H^{p,q}_{\bar D}(\mathcal{U},U_{0})$ be the
cohomology associated to $\left(A^{p,\bullet}(\mathcal{U},U_{0}),\,\bar D\right)$. From the short exact sequence 
\[
0\lra A^{p,\bullet}(\mathcal{U},U_{0})\lra A^{p,\bullet}(\mathcal{U})
\lra A^{p,\bullet}(U_{0})\lra 0,
\]
where the first map is the inclusion and the second map is the projection on the first element, we obtain a long exact sequence in cohomology
\begin{equation}\label{lexcd}
\cdots\lra H^{p,q-1}_{\delbar}(U_{0})\overset\delta\lra H^{p,q}_{\bar D}(\mathcal{U},U_{0})\overset{j^{*}}\lra H^{p,q}_{\bar D}(\mathcal{U})\overset{i^{*}}\lra H^{p,q}_{\delbar}(U_{0})\lra\cdots.
\end{equation}
Therefore, $H^{\bullet,\bullet}_{\bar D}(\mathcal{U},U_{0})$ is determined uniquely modulo
canonical isomorphism, independently of the choice of $U_{1}$. We denote it also by  $H^{\bullet,\bullet}_{\bar D}(X,X\setminus S)$
and we call it the \emph{relative \v{C}ech-Dolbeault cohomology} of $X$, see \cite[Section 2]{suwa-ASPM}, where it is denoted by $H^{\bullet,\bullet}_{\overline \partial}(X,X\setminus S)$.
We recall that {\em excision} holds: for any neighbourhood $U$ of $S$ in $X$, it holds $H^{\bullet,\bullet}_{\bar D}(X,X\setminus S)\simeq H^{\bullet,\bullet}_{\bar D}(U,U\setminus S)$. 
In fact we have, \cite{suwa-relD},
\[
H^{p,q}_{\bar D}(X,X\setminus S)\simeq  H^q(X,X\setminus S; \Omega^{p}), 
\]
the relative cohomology of the pair $(X,X\setminus S)$ with coefficients in the sheaf $\Omega^{p}$ of holomorphic $p$-forms.

Together with integration theory, the relative \v{C}ech-Dolbeault cohomology has been used
to study the localization of characteristic classes, see \cite{suwa-ASPM, abate-bracci-suwa-tovena}, and has found more recent
applications to hyperfunction theory, see \cite{honda-izawa-suwa}.

Notice that if $X$ and $\tilde X$ are complex manifolds, $S$ and $\tilde S$ are closed sets
in $X$ and $\tilde X$ respectively and $f:\tilde X\to X$ is a holomorphic map such that $f(\tilde S)\subset S$ and
$f(\tilde X\setminus \tilde S)\subset f(X\setminus S)$, 
then $f$ induces a natural map in relative cohomology.
More precisely, let $U_0:= X\setminus S$, $\tilde U_0:= \tilde X\setminus \tilde S$
and let $U_1$, $\tilde U_1$ be open neighborhoods of $S$ and $\tilde S$
in $X$ and $\tilde X$ respectively, chosen in such a way that
$f(\tilde U_1)\subset U_1$. Let
$\mathcal{U}:=\left\lbrace U_0,U_1\right\rbrace$
and $\mathcal{\tilde U}:=\left\lbrace \tilde U_0,\tilde U_1\right\rbrace$
be open coverings of $X$ and $\tilde X$ respectively, then we have a morphism
$$
f^*:A^{\bullet,\bullet}(\mathcal{U},U_0)\lra 
A^{\bullet,\bullet}(\mathcal{\tilde U},\tilde U_0)
$$
defined on every element $(\xi_1,\xi_{01})\in A^{\bullet,\bullet}(\mathcal{U},U_0)$ as
$$
f^*(\xi_1,\xi_{01}):=(f^*\xi_1,f^*\xi_{01})
$$
which induces a morphism in relative cohomology
$$
f^*:H^{\bullet,\bullet}_{\bar D}(X,X\setminus S)\lra
H^{\bullet,\bullet}_{\bar D}(\tilde X,\tilde X\setminus \tilde S)\,.
$$

\subsection{Dolbeault-Thom morphism}\label{subsec:d-thom}
We consider a holomorphic vector bundle $\pi\colon E\to X$ of rank $k$ on a complex manifold $X$
and we identify $X$ with the image of the zero section. In this situation we have the Dolbeault-Thom class,
$\delbar$-Thom class for short, 
$\bar\Psi_{E}\in H^{k,k}_{\bar D}(E,E\setminus X)$ and the Dolbeault-Thom morphism, $\delbar$-Thom morphism for short, $\bar T_{E}\colon H^{p-k,q-k}_{\delbar}(X)\to H^{p,q}_{\bar D}(E,E\setminus X)$.

They are given as follows, see  \cite{abate-bracci-suwa-tovena, suwa-ASPM}. Consider the fibre product
\[
\SelectTips{cm}{}
\xymatrix
@C=.7cm
@R=.7cm
{\pi^{*}E\ar[r]\ar[d]_-{\varpi}& E\ar[d]^-{\pi}\\
 E\ar[r]^-{\pi} & X.}
\]
The bundle $\varpi:\pi^{*}E\to E$ admits the diagonal section $s_{\Delta}$, whose zero set is $X\subset E$.
The Dolbeault-Thom class $\bar\Psi_{E}$ is the localization of the top Atiyah class $a^{k}(\pi^{*}E)$ of $\pi^{*}E$ by $s_{\Delta}$. More precisely, let $W_{0}=E\setminus X$ and let $W_{1}$ be a neighbourhood of $X$ in $E$, and consider the
covering $\mathcal{W}=\{W_{0},W_{1}\}$ of $E$. For a $(1,0)$-connection $\nabla$ for $\pi^{*}E$, we denote by $a^{k}(\nabla)$ the $k$-th Atiyah form of $\nabla$, namely, $a^k(\nabla)=\big(\frac{\sqrt{-1}}{2\pi}\big)^{k}\sigma_k(K^{1,1})$, where $K^{1,1}$ is the $(1,1)$-component of the curvature seen as a $2$-form with values in $\mathrm{Hom}(E,E)$ and $\sigma_k$ denotes the $k$-th elementary symmetric polynomial, see \cite[Section 5]{suwa-ASPM} for more details. The class $a^{k}(\pi^{*}E)$ is represented in 
$H^{k,k}_{\delbar}(E)\simeq H^{k,k}_{\bar D}(\mathcal{W})$ by the triple 
$a^{k}(\nabla_{*})=(a^{k}(\nabla_{0}),a^{k}(\nabla_{1}),a^{k}(\nabla_{0},\nabla_{1}))$, where $\nabla_{i}$ is a $(1,0)$-connection for $\pi^{*}E$ on $W_{i}$, $i=0,1$, and  $a^{k}(\nabla_{0},\nabla_{1})$ is the  difference form of $\nabla_{0}$ and
$\nabla_{1}$. If we take $\nabla_{0}$ to be $s_{\Delta}$-trivial, we have the vanishing $a^{k}(\nabla_{0})=0$ and 
$a^{k}(\nabla_{*})$ defines a class in $H^{k,k}_{\bar D}(\mathcal{W},W_{0})=H^{k,k}_{\bar D}(E,E\setminus X)$ that is the {\em Dolbeault-Thom class} $\bar\Psi_{E}$ of $E$.

The {\em Dolbeault-Thom morphism}
\[
\bar T_{E}\colon H^{p-k,q-k}_{\delbar}(X)\lra H^{p,q}_{\bar D}(E,E\setminus X).
\]
is given by the cup product with $\bar\Psi_{E}$, {\itshape i.e.} if $\bar\Psi_{E}$ is represented by $(\psi_{1},\psi_{01})$, it is induced by
\[
\theta\mapsto (\psi_{1}\wedge\pi^{*}\theta,\psi_{01}\wedge\pi^{*}\theta).
\]
The inverse of $\bar T_{E}$ is given by the $\delbar$-integration along the fibres of $\pi$:
\[
\bar\pi_{*}:H^{p,q}_{\bar D}(E,E\setminus X)\lra H^{p-k,q-k}_{\delbar}(X).
\]
It is defined as follows. Let $T_{1}$ denote a bundle of discs of complex dimension $k$ in $W_{1}$ and
set $T_{01}=-\partial T_{1}$, which is a bundle of spheres of real dimension $2k-1$ endowed with
the orientation opposite to that of the boundary $\partial T_{1}$ of $T_{1}$. Set $\pi_{1}=\pi|_{T_{1}}$
and $\pi_{01}=\pi|_{T_{01}}$. Then we have the usual integration along the fibres
\[
(\pi_{1})_{*}:A^{r}(W_{1})\lra  A^{r-2k}(X)\quad\text{and}\quad (\pi_{01})_{*}:A^{r-1}(W_{01})\lra A^{r-2k}(X).
\]
The map $(\pi_{1})_{*}$ sends a $(p,q)$-form to a $(p-k,q-k)$-form, while, if $\xi_{01}$ is a $(p,q-1)$-form
on $W_{01}$,
$(\pi_{01})_{*}(\xi_{01})$ consists of  $(p-k,q-k)$ and  $(p-k+1,q-k-1)$-components. We define
\[
(\bar\pi_{01})_{*}:A^{p,q-1}(W_{01})\lra A^{p-k,q-k}(X)
\]
by taking the $(p-k,q-k)$-component of $(\pi_{01})_{*}(\xi_{01})$, then
\[
\bar\pi_{*}\xi=(\pi_{1})_{*}\xi_{1}+(\bar\pi_{01})_{*}\xi_{01}.
\]
In this situation,
\[
\bar\pi_{*}\circ \bar T_{E}=1.
\]
Thus $\bar\pi_{*}$ is surjective and $\bar T_{E}$ gives a splitting of
\[
0\lra\text{ker}\bar\pi_{*}\lra H^{p,q}_{\bar D}(E,E\setminus X)\overset{\bar\pi_{*}}\lra H^{p-k,q-k}_{\delbar}(X)\lra 0.
\]
For the $\delbar$-Thom class $\bar\Psi_{E}\in H^{k,k}_{\bar D}(E,E\setminus X)$, we have
$\bar\pi_{*}\bar \Psi_{E}=[1]\in H^{0,0}_{\delbar}(X)$.

\subsection{$\del\delbar$-Lemma and Hodge structures}\label{ss-deldelbar}
Although these may be well-known to experts, we recall what 
the $\del\delbar$-Lemma means and some alternative ways of saying that for later use.

Let $X$ be a complex manifold.  The de~Rham complex $(A^{\bullet}(X),d)$ of $X$ is the single complex associated with the double complex $(A^{\bullet,\bullet}(X),\partial,\delbar)$, $d=\partial+\delbar$.
Recall that \cite{deligne-griffiths-morgan-sullivan} $X$ satisfies the $\partial\delbar$-Lemma if 
\begin{equation}\label{conddeldelbar}
\op{ker}\partial\cap\op{ker}\delbar\cap\op{im}d=
\op{im}\partial\delbar.
\end{equation}

We describe the above property in terms of filtrations.
Note that $A^{\bullet}(X)$ 
 has two natural filtrations.
The first  filtration on $A^{h}(X)$ is given by
\[
{}'\hspace{-.5mm}F^{p}A^{h}(X)=\bigoplus_{i= p}^{h} A^{i,h-i}(X).
\]
It induces a  filtration on $H^{h}_{dR}(X)$ by
\[
{}'\hspace{-.5mm}F^{p}H^{h}_{dR}(X)=\op{ker} d^{h}\cap {}'\hspace{-.5mm}F^pA^{h}(X)/\op{im}d^{h-1}\cap {}'\hspace{-.5mm}F^pA^{h}(X).
\]

The second filtration on $A^{h}(X)$ is given by
\[
{}''\hspace{-.5mm}F^{q}A^{h}(X)=\bigoplus_{j=q}^{h} A^{h-j,j}(X)
\]
and it induces a filtration $({}''\hspace{-.5mm}F^{q}H^{h}_{dR}(X))$ on $H^{h}_{dR}(X)$.

Since $\overline{A^{q,p}(X)}=A^{p,q}(X)$, we may identify the filtration $(\overline{{}'\hspace{-.5mm}F^{q}A^{h}(X)})$ conjugate to $({}'\hspace{-.5mm}F^{q}A^{h}(X))$ with the second 
filtration:
$\overline{{}'\hspace{-.5mm}F^{q}A^{h}(X)}={}''\hspace{-.5mm}F^{q}A^{h}(X),
$
which leads to the identification
\[
\overline{{}'\hspace{-.5mm}F^{q}H^{h}_{dR}(X)}={}''\hspace{-.5mm}F^{q}H^{h}_{dR}(X).
\]

We say that the filtration $({}'\hspace{-.5mm}F^pH^{h}_{dR}(X))$ is
a {\em Hodge filtration} of weight $h$ if
\[
H^{h}_{dR}(X)=\bigoplus_{p+q=h} {}'\hspace{-.5mm}F^pH^{h}_{dR}(X)\cap\overline{{}'\hspace{-.5mm}F^qH^{h}_{dR}(X)}.
\]

\begin{lemma}\label{lemmahf} The filtration $({}'\hspace{-.5mm}F^pH^{h}_{dR}(X))$ is a Hodge filtration of weight $h$ if and only if
\[
H^{h}_{dR}(X)={}'\hspace{-.5mm}F^pH^{h}_{dR}(X)\oplus\overline{{}'\hspace{-.5mm}F^qH^{h}_{dR}(X)}\qquad\text{for every $(p,q)$ with $p+q=h+1$}.
\]

Moreover, if this is the case, there is a canonical  \iso
\[
{}'\hspace{-.5mm}F^pH^{h}_{dR}(X)\cap\overline{{}'\hspace{-.5mm}F^qH^{h}_{dR}(X)}\simeq {}'\hspace{-.5mm}G^{p}H^{h}_{dR}(X)\qquad\text{for every $(p,q)$ with $p+q=h$},
\]
where ${}'\hspace{-.5mm}G^{p}H^{h}_{dR}(X)={}'\hspace{-.5mm}F^{p}H^{h}_{dR}(X)/{}'\hspace{-.5mm}F^{p+1}H^{h}_{dR}(X)$.
\end{lemma}
\begin{proof} It is rather straightforward to show the equivalence of two expressions for Hodge filtrations.
We only indicate a proof of the last statement for later use.
In the sequel we denote $H^{h}_{dR}(X)$ by $H^{h}$.

For $c\in {}'\hspace{-.5mm}F^pH^{h}$ we denote by $[c]^{p}$ its class
in ${}'\hspace{-.5mm}G^{p}H^{h}$. We define a morphism
\[
{}'\hspace{-.5mm}F^pH^{h}\cap\overline{{}'\hspace{-.5mm}F^qH^{h}}\lra {}'\hspace{-.5mm}G^{p}H^{h}\qquad\text{by}\ \ c\mapsto [c]^{p}
\]
and show that it is an \iso. For the surjectivity, take $[c]^{p}\in {}'\hspace{-.5mm}G^{p}H^{h}$, $c\in {}'\hspace{-.5mm}F^{p}H^{h}$.
Then we may write uniquely  $c=\sum_{i=0}^{h}c^{i,h-i}$ with $c^{i,h-i}\in {}'\hspace{-.5mm}F^{i}H^{h}\cap\overline{{}'\hspace{-.5mm}F^{h-i}H^{h}}$.
We have $[c]^{p}=[c']^{p}$, $c'=\sum_{i=0}^{p}c^{i,h-i}$. Since $\sum_{i=p+1}^{h}c^{i,h-i}\in {}'\hspace{-.5mm}F^{p+1}H^{h}\subset {}'\hspace{-.5mm}F^{p}H^{h}$, we have $c'\in {}'\hspace{-.5mm}F^{p}H^{h}$. On the other hand, $c'$ is also in $\overline{{}'\hspace{-.5mm}F^{q}H^{h}}$ and $c'\mapsto [c]^{p}$.
For the injectivity, take $c\in {}'\hspace{-.5mm}F^{p}H^{h}\cap\overline{{}'\hspace{-.5mm}F^{q}H^{h}}$ \st\ $[c]^{p}=0$.
This means that $c\in {}'\hspace{-.5mm}F^{p+1}H^{h}\cap\overline{{}'\hspace{-.5mm}F^{q}H^{h}}=0$.
\end{proof}

The spectral sequence associated with the first filtration of $A^{\bullet}(X)$ is  the
Fr\"olicher spectral sequence \cite{frolicher}, for which we have 
\[
E^{p,q}_{1}\simeq H^{p,q}_{\delbar}(X),\qquad
E^{p,q}_{\infty}\simeq {}'\hspace{-.5mm}G^{p}H^{p+q}_{dR}(X),
\]


\begin{proposition}[\cite{deligne-griffiths-morgan-sullivan}] A complex manifold $X$ satisfies the $\partial\delbar$-Lemma if and only if  the following two
conditions hold:
\begin{enumerate}
\item[\rm (1)] the Fr\"olicher spectral sequence degenerates at $E_{1}$,
\item[\rm (2)] the filtration $({}'\hspace{-.5mm}F^pH^{h}_{dR}(X))$ is a Hodge filtration of weight $h$ for
every $h\ge 0$.
\end{enumerate}
\end{proposition}

Note that every element of ${}'\hspace{-.5mm}G^{p}H^{p+q}_{dR}(X)$
 is expressed as $[[\omega]]^{p}$, where $\omega$ is a $d$-closed form in ${}'\hspace{-.5mm}F^{p}A^{p+q}(X)$,
$[\omega]$ is the class of $\omega$ in 
${}'\hspace{-.5mm}F^{p}H^{p+q}_{dR}(X)$ and $[[\omega]]^{p}$ is the class of $[\omega]$ in ${}'\hspace{-.5mm}G^{p}H^{p+q}_{dR}(X)$. The condition $d\omega=0$ implies that $\delbar\omega^{p,q}=0$ when we write $\omega=\sum_{i=p}^{p+q}\omega^{i,p+q-i}$.

The condition (1) above is equivalent to saying that, for every $(p,q)$, the assignment $[[\omega]]^{p}\mapsto [\omega^{p,q}]$ is well-defined and induces an \iso
\[
{}'\hspace{-.5mm}G^{p}H^{p+q}_{dR}(X) \overset\sim\lra H^{p,q}_{\delbar}(X).
\]

Recall that $A^{p,q}(X)=\overline{A^{q,p}(X)}$ and
$A^{h}(X)=\bigoplus_{p+q=h} A^{p,q}(X)$. We ask when these relations carry on to the cohomologies.

\begin{definition}\label{defH} 1. We say that $X$ admits a Hodge \str\ of weight $h$,
if there exist \iso s
\[
H^{p,q}_{\delbar}(X)\simeq \overline{H^{q,p}_{\delbar}(X)},\  p+q=h,\quad\text{and}\quad H^{h}_{dR}(X)\simeq\bigoplus_{p+q=h}H^{p,q}_{\delbar}(X).
\]

\noindent
2. A Hodge \str\ as above is said to be {\em natural}, if the following conditions hold:
\begin{enumerate}
\setlength{\leftskip}{2.3mm}
\item[(H1)] Every class in $H^{p,q}_{\delbar}(X)$, $p+q=h$, admits a representative $\omega$ with
$\partial\omega=0$ and $\delbar\omega=0$, i.e., $d\omega=0$. Moreover, the assignment $\omega\mapsto \bar\omega$ induces the first \iso\ above.
\item[(H2)] Every class in $H^{h}_{dR}(X)$ admits a representative $\omega$ which may be written
$\omega=\sum_{p+q=h}\omega^{p,q}$, where $\omega^{p,q}$ is a  $(p,q)$-form with $d\omega^{p,q}=0$. Moreover, the assignment $\omega\mapsto(\omega^{p,q})_{p+q=h}$ induces the second \iso\ above.
\end{enumerate}
\end{definition}

\begin{remark}  In (H1) above, $\overline{H^{q,p}_{\delbar}(X)}$ denotes the vector space conjugate to 
$H^{q,p}_{\delbar}(X)$, i.e., the vector space with underlying set  $H^{q,p}_{\delbar}(X)$ and the 
complex multiplication given by $c\cdot \omega=\bar c\, \omega$. We may rephrase (H1) as:
\begin{enumerate}
\setlength{\leftskip}{3mm}
\item[(H1)$'$] Every class in $H^{p,q}_{\delbar}(X)$, $p+q=h$, admits a representative $\omega$ with
$\partial\omega=0$ and $\delbar\omega=0$, i.e., $d\omega=0$. Moreover, the assignment $\omega\mapsto \omega$ induces
an \iso\
$H^{p,q}_{\delbar}(X)\simeq 
H^{p,q}_{\partial}(X)$.
\end{enumerate}
\end{remark}

\begin{remark}
See \cite[Proposition 4.3]{couv} for an example of a compact complex manifold with a non-natural Hodge structure.
\end{remark}

\begin{proposition}\label{ch9thHodgedec} A complex manifold $X$ admits a natural Hodge \str\
of weight $h$ if and only if 
the following conditions hold\,{\rm :}
\begin{enumerate}
\item[{\rm (i)}] the morphism $\op{ker}d\cap {}'\hspace{-.5mm}F^{p}A^{h}(X)\ra A^{p,h-p}(X)$,
$\omega\mapsto\omega^{p,h-p}$, induces an \iso\ ${}'\hspace{-.5mm}G^{p}H^{h}_{dR}(X)\simeq H^{p,h-p}_{\delbar}(X)$ for every $p$,
\item [{\rm (ii)}] $({}'\hspace{-.5mm}F^{p}H^{h}_{dR}(X))$ is a Hodge filtration on $H^{h}_{dR}(X)$ of   weight $h$.
\end{enumerate}
\end{proposition}

\begin{proof} Suppose $X$ admits the natural Hodge \str\
of weight $h$. We claim that there is an \iso
\begin{equation}\label{ch9HF}
{}'\hspace{-.5mm}F^{p}H^{h}_{dR}(X)\simeq\bigoplus_{i=p}^{h}H^{i,h-i}_{\delbar}(X)
\end{equation}
compatible with the one in (H2) in the sense that the following is commutative:
\[
\SelectTips{cm}{}
\xymatrix
@C=.25cm
@R=.3cm
{{}'\hspace{-.5mm}F^{p}H^{h}_{dR}(X)
\ar@{-}[r]^-{\sim}
\ar@{}[d]|{\bigcap} &\bigoplus_{i=p}^{h}H^{i,h-i}_{\delbar}(X)\ar@{}[d]|{\bigcap}\\
H^{h}_{dR}(X)\ar@{-}[r]^-{\sim} &\bigoplus_{i=0}^{h}H^{i,h-i}_{\delbar}(X).}
\]
For this, 
take $\theta\in\op{ker}d^{h}\cap {}'\hspace{-.5mm}F^{p}A^{h}(X)$ and write 
$\theta=\sum_{i=p}^{h}\theta^{i,h-i}$ with $\theta^{i,h-i}\in A^{i,h-i}(X)$. From $d\theta=0$ and (H1), we see that there
exist $\omega^{i,h-i}$ and $\alpha^{i,h-i}$ in $A^{i,h-i}(X)$, $p\le i\le h$,  \st\ $d\omega^{i,h-i}=0$ and that 
\[
\theta^{i,h-i}=\omega^{i,h-i}+\partial\alpha^{i-1,h-i}+\delbar\alpha^{i,h-i-1},
\]
where we set $\alpha^{p-1,h-p}=0$.
Then we have
\[
\theta=\sum_{i=p}^{h}\omega^{i,h-i}+d\sum_{i=p}^{h}\alpha^{i,h-i-1}.
\]
By (H2), the assignment $\theta\mapsto (\omega^{i,h-i})_{i=p}^{h}$ induces a well-defined morphism
${}'\hspace{-.5mm}F^{p}H^{h}_{dR}(X)\ra \bigoplus_{i=p}^{h}H^{i,h-i}_{\delbar}(X)$ compatible with
the \iso\ of (H2). It is obviously injective. The surjectivity follows from (H1) and
it is the desired \iso.

From \eqref{ch9HF}, we have ${}'\hspace{-.5mm}G^{p}H^{h}_{dR}(X)\simeq H^{p,h-p}_{\delbar}(X)$
and the correspondence is the one  as given in (i).

We also have
an \iso\ $\overline{{}'\hspace{-.5mm}F^{q}H^{h}_{dR}(X)}\simeq\bigoplus_{j=q}^{h}H^{h-j,j}_{\delbar}(X)$ compatible with the one in (H2). Thus for $(p,q)$ with $p+q=h+1$, $H^{h}_{dR}(X)=
{}'\hspace{-.5mm}F^{p}H^{h}_{dR}(X)\oplus \overline{{}'\hspace{-.5mm}F^{q}H^{h}_{dR}(X)}$
and we have (ii).

Now we prove the converse. 
The condition (ii) implies
 (cf. Lemma \ref{lemmahf})
\begin{equation}\label{ch9H2}
H^{h}_{dR}(X)=\bigoplus_{p+q=r}{}'\hspace{-.5mm}F^{p}H^{h}_{dR}(X)\cap\overline{{}'\hspace{-.5mm}F^{q}H^{h}_{dR}(X)}\qquad\text{and}
\end{equation}
\begin{equation}\label{ch9H3}
{}'\hspace{-.5mm}F^{p}H^{h}_{dR}(X)\cap\overline{{}'\hspace{-.5mm}F^{q}H^{h}_{dR}(X)}\simeq {}'\hspace{-.5mm}G^{p}H^{h}_{dR}(X),\quad h=p+q.
\end{equation}
From the condition (i) and  \eqref{ch9H3}, we have 
\[
\begin{aligned}
H^{p,q}_{\delbar}(X)&\simeq {}'\hspace{-.5mm}G^{p}H^{h}_{dR}(X)\simeq
{}'\hspace{-.5mm}F^{p}H^{h}_{dR}(X)\cap \overline{{}'\hspace{-.5mm}F^{q}H^{h}_{dR}(X)}\\
&\simeq \overline{{}'\hspace{-.5mm}G^{q}H^{h}_{dR}(X)}\simeq \overline{H^{q,p}_{\delbar}(X)},
\quad h=p+q.
\end{aligned}
\]
We look at the correspondence above.
Take $c\in
{}'\hspace{-.5mm}F^{p}H^{h}_{dR}(X)\cap \overline{{}'\hspace{-.5mm}F^{q}H^{h}_{dR}(X)}$, we may write
$c=[\omega_{1}]=[\omega_{2}]$, where $\omega_{1}=\sum_{i=p}^{h}\omega_{1}^{i,h-i}\in {}'\hspace{-.5mm}F^{p}A^{h}(X)$, $d\omega_{1}=0$ (thus $\delbar\omega_{1}^{p,q}=0$) and 
$\omega_{2}=\sum_{j=q}^{h}\omega_{1}^{h-j,j}\in \overline{{}'\hspace{-.5mm}F^{q}A^{h}(X)}$, $d\omega_{2}=0$ (thus $\partial\omega_{2}^{p,q}=0$). Then the correspondence is given by $[\omega_{1}^{p,q}]
\leftrightarrow [\overline{\omega_{2}^{p,q}}]$. From $[\omega_{1}]=[\omega_{2}]$, we see that there
exist $\theta^{p,q-1}\in A^{p,q-1}(X)$ and $\theta^{p-1,q}\in A^{p-1,q}(X)$ \st\
\[
\omega_{1}^{p,q}-\omega_{2}^{p,q}=\partial\theta^{p-1,q}+\delbar\theta^{p,q-1}.
\]
Then $\omega=\omega^{p,q}_{1}-\delbar\theta^{p,q-1}=\omega^{p,q}_{2}+\partial\theta^{p-1,q}$ is a representative
as in (H1).
From \eqref{ch9H2}, \eqref{ch9H3} and the condition (i), we have (H2).
\end{proof}

\begin{corollary}\label{cordeldelbar} A complex manifold satisfies the $\partial\delbar$-Lemma if and only if it admits a natural
Hodge \str\ of weight $h$ for every $h$.
\end{corollary}

\begin{remark} If we use the  {\em Bott-Chern cohomology} $H^{\bullet,\bullet}_{BC}(X) = \frac{\ker\del \cap \ker\delbar}{\mathrm{im}\,\del\delbar}$, the condition \eqref{conddeldelbar}
means that
the morphism
$$ H^{\bullet,\bullet}_{BC}(X) \lra H^{\bullet}_{dR}(X)$$
induced by the identity is injective.
A numerical characterization of the $\del\delbar$-Lemma in terms of the dimension of the Bott-Chern cohomology and the Betti numbers is provided in \cite{angella-tomassini-3} and in \cite{angella-tardini} using only Bott-Chern numbers.
\end{remark}

\section{Dolbeault cohomology of the projectivization of a holomorphic vector bundle}\label{secproj}

Let $X$ be a smooth manifold.
Also let  $\pi:V\to X$ be a complex vector bundle of rank $k$ and  denote by $\rho:\mathbb{P}(V)\ra X$ its
projectivization. We may regard $H^{\bullet}_{dR}(\mathbb{P}(V))$ as an $H^{\bullet}_{dR}(X)$-module
(in fact, $H^{\bullet}_{dR}(X)$-algebra).  Here we regard it as a right module by our convention and the
module structure is given 
by $c\cdot a=c\smallsmile \rho^{*}(a)$ for $c\in H^{\bullet}_{dR}(\mathbb{P}(V))$ and $a\in H^{\bullet}_{dR}(X)$, where $\smallsmile$ denotes the cup product. In the sequel  it will be simply denoted by $\cdot$, if there is no fear of confusion.

In the above situation we have the tautological bundle $T$ on $\mathbb{P}(V)$, which is a rank one subbundle of 
$\rho^{*}V$ with the universal  bundle $Q$ as the quotient so that we have an exact sequence of vector bundles on $\mathbb{P}(V)$:
\begin{equation}\label{exactuniv0}
0\lra T\lra \rho^{*}V\lra  Q\lra 0.
\end{equation}
Recall that 
$\rho^*V=\{\,(v,l)\in V\times \mathbb{P}(V)\mid \pi(v)=\rho(l)\,\}$. We may think of a point $l$ in $\mathbb P(V)$ as a line in $V_{x}=\C^{k}$, $x=\pi(v)$,  and we have
$T=\{\,(v,l)\in \rho^{*}V\mid v\in l\,\}$.

We  recall the following, which is a direct consequence of the Leray-Hirsch theorem (cf. \cite[Proposition page 606]{griffiths-harris}, \cite[Lemma 7.32]{voisin-1}):
\begin{proposition}\label{prop:deRham-projectivized}
In the above situation, $H^{\bullet}_{dR}(\mathbb{P}(V))$ is a free $H^{\bullet}_{dR}(X)$-module with
basis $1, \gamma,\dots, \gamma^{k-1}$, where $\gamma=c^{1}(T)$ is the first Chern class of $T$.
\end{proposition}

The essential point in the above is that the restriction of $\gamma$ to each fibre, which is the projective space $\mathbb{P}^{k-1}$, is the first Chern of the tautological bundle (dual of the hyperplane bundle) on $\mathbb{P}^{k-1}$
and that their powers up to the $(k-1)$-st form a $\mathbb{C}$-basis of $H^{\bullet}_{dR}(\mathbb{P}^{k-1})$. As an $H^{\bullet}_{dR}(X)$-algebra, $H^{\bullet}_{dR}(\mathbb{P}(V))$ is generated by $\gamma$ with the single relation
\begin{equation}\label{relchern}
\sum_{i=0}^{k}(-1)^{i}\gamma^{i}\cdot\rho^{*}c^{k-i}(V)=0,
\end{equation}
where $c^{k-i}(V)$ is the $(k-i)$-th Chern class of $V$. The relation can be seen from $c(T)\cdot c(Q)=\rho^{*}c(V)$, the relation among the total Chern classes, which follows from \eqref{exactuniv0}.

 If we take a metric connection for 
 $T$, its curvature form $\kappa$
is of type $(1,1)$ and is simultaneously $d$- and $\delbar$-closed. We also have $\bar\kappa=-\kappa$. The class of $\frac {\sqrt{-1}}{2\pi}\kappa$
in $H^{2}_{dR}(\mathbb{P}(V))$ is the first Chern class $\gamma=c^{1}(T)$ and its class in 
$H^{1,1}_{\delbar}(\mathbb{P}(V))$ is the first Atiyah class $a^{1}(T)$.
Note that they cannot be compared
directly on the cohomology level, in general. However, their restrictions to each fibre of $\mathbb{P}(V)\ra X$
may be identified, as the fibre is $\mathbb{P}^{k-1}$ and it satisfies the $\del\delbar$-Lemma.

\begin{proposition}\label{prop:dolbeault-projectivized}
Let $V\ra X$ be a holomorphic vector bundle of rank $k$ on a compact complex manifold $X$. Then $H^{\bullet,\bullet}_{\delbar}(\mathbb{P}(V))$ is a free $H^{\bullet,\bullet}_{\delbar}(X)$-module with
basis $1, \alpha,\dots, \alpha^{k-1}$, where $\alpha=a^{1}(T)$ is the first Atiyah class of $T$.
\end{proposition}

\begin{proof} By \cite[Lemma 18]{cordero-fernandez-gray-ugarte}, we see that $H^{\bullet,\bullet}_{\delbar}(\mathbb{P}(V))$ is generated by $\alpha$ as an $H^{\bullet,\bullet}_{\delbar}(X)$-algebra.
We have a relation as \eqref{relchern}, replacing $\gamma$ and $c^{k-i}(V)$ with $\alpha$ and $a^{k-i}(V)$,
the $(k-i)$-th Atiyah class of $V$, from which we see that $1,\alpha,\dots,\alpha^{k-1}$ generate $H^{\bullet,\bullet}_{\delbar}(\mathbb{P}(V))$  as an $H^{\bullet,\bullet}_{\delbar}(X)$-module.
The proposition follows from the following:

\noindent{\itshape Claim.} $1,\alpha,\dots,\alpha^{k-1}$ are linearly independent over $H^{\bullet,\bullet}_{\delbar}(X)$. 

To prove this, we look at the $\mathbb{C}$-algebra structure of $H^{\bullet,\bullet}_{\delbar}(\mathbb{P}(V))$. Let $n=\dim X$ and $h^{p,q}=\dim H^{p,q}_{\delbar}(X)$. For each $(p,q)$ with $h^{p,q}\ne 0$, we take a basis
$\{u^{p,q}_{i}\}_{1\le i\le h}$  of $H^{p,q}_{\delbar}(X)$ so that  $\{u^{n-p,n-q}_{i'}\}_{1\le i'\le h}$ is the basis of $H^{n-p,n-q}_{\delbar}(X)$ dual to $\{u^{p,q}_{i}\}$ via the Kodaira-Serre duality, $h=h^{p,q}=h^{n-p,n-q}$:
\[
\int_{X}u^{p,q}_{i}\cdot u^{n-p,n-q}_{i'}=\pm\delta_{ii'}.
\]

Obviously $\{\alpha^{r}\cdot\rho^{*}u^{p,q}_{i}\}_{0\le r\le k-1,\, p,\, q,\, i}$
span the $\mathbb{C}$-vector space $H^{\bullet,\bullet}_{\delbar}(\mathbb{P}(V))$.
We show that they are linearly independent
over $\mathbb{C}$, which will prove the claim and the proposition.
For this we introduce a
relation $>$ in the set $\Lambda$ of indices $\lambda=(r,p,q,i)$ by saying that $(r_{1},p_{1},q_{1},i_{1})>(r_{2},p_{2},q_{2},i_{2})$ if one of the following holds:
\begin{enumerate}
\item $2r_{1}+p_{1}+q_{1}>2r_{2}+p_{2}+q_{2}$,
\item $2r_{1}+p_{1}+q_{1}=2r_{2}+p_{2}+q_{2}$ and $p_{1}+q_{1}>p_{2}+q_{2}$,
\item $r_{1}=r_{2}$, $p_{1}+q_{1}=p_{2}+q_{2}$ and $p_{1}>p_{2}$,
\item $r_{1}=r_{2}$, $p_{1}=p_{2}$, $q_{1}=q_{2}$ and $i_{1}>i_{2}$.
\end{enumerate}
With this, $\Lambda$ becomes a totally ordered set. Let $\Lambda'$ denote the set $\Lambda$ with
the order defined by reversing the inequalities in (1), (2) and (3) and keeping that in (4) above.
We consider the matrix 
$(v_{\lambda}\cdot v_{\lambda'})_{(\lambda,\lambda')\in\Lambda\times\Lambda'}$, where, for 
$\lambda=(r,p,q,i)$, $v_{\lambda}=\alpha^{r}\cdot\rho^{*}u^{p,q}_{i}$ and similarly for $v_{\lambda'}$.
On the diagonal, we have $v_{\lambda}\cdot v_{\lambda'}$ for which $r+r'=k-1$, $p+p'=n$, $q+q'=n$ and $i=i'$ (note that this makes sense as $p+p'=n$ and $q+q'=n$),  when we write $\lambda=(r,p,q,i)$ and $\lambda'=(r',p',q',i')$. In this case, noting that $\rho_{*}\alpha^{k-1}=(-1)^{k-1}$, as the restriction of $\alpha$ to each fibre is the first Atiyah class of the dual of the hyperplane bundle, 
by the projection formula,
\[
\int_{\mathbb{P}(V)}(\alpha^{r}\cdot \rho^{*}u^{p,q}_{i})\cdot(\alpha^{r'}\cdot\rho^{*}u^{p',q'}_{i'})=
\rho_{*}\alpha^{k-1}\cdot \int_{X}u^{p,q}_{i}\cdot u^{n-p,n-q}_{i}=\pm 1.
\]

On the upper triangle, but off the diagonal, we have $v_{\lambda}\cdot v_{\lambda'}$ for which one of the following holds:
\begin{enumerate}
\item $2r+2r'+p+p'+q+q'>2(n+k-1)$,
\item $2r+2r'+p+p'+q+q'=2(n+k-1)$ and $p+p'+q+q'>2n$,
\item $r+r'=k-1$, $p+p'+q+q'=2n$ and $p+p'>n$,
\item $r+r'=k-1$, $p+p'=n$, $q+q'=n$ and $i<i'$.
\end{enumerate}
Recalling that $\dim X=n$ and $\dim \mathbb{P}(V)=n+k-1$, we have, in the case (1), (2) or (3),
\[
(\alpha^{r}\cdot\rho^{*}u^{p,q}_{i})\cdot(\alpha^{r'}\cdot\rho^{*}u^{p',q'}_{i'})=0,
\]
by dimension reason. In the case (4), $\int_{\mathbb{P}(V)}v_{\lambda}\cdot v_{\lambda'}=0$ by a similar
computation as above.
Thus the Kodaira-Serre dual
of the matrix $(v_{\lambda}\cdot v_{\lambda'})$ is triangular with $\pm 1$'s along the diagonal,
which shows that the $\alpha^{r}\cdot\rho^{*}u^{p,q}_{i}$'s are linearly independent over $\mathbb{C}$.
\end{proof}

\begin{corollary}\label{cor:deldelbar-E}
Let $X$ be a compact complex manifold and
$V\to X$  a holomorphic fibre bundle on $X$. If $X$ satisfies the $\del\delbar$-lemma, so does $\mathbb{P}(V)$.
\end{corollary}
\begin{proof}
The statement follows from Corollary \ref{cordeldelbar} and Propositions \ref{prop:deRham-projectivized} and 
\ref{prop:dolbeault-projectivized}, noting that $\gamma$ and $\alpha$ are both represented by the same 
form $\frac{\sqrt{-1}}{2\pi}\kappa$ as above.
\end{proof}

\section{Hodge structures under blow-ups}

We can now prove explcit expressions for the de Rham (Proposition \ref{isodR}) and Dolbeault (Proposition \ref{isoDol}) cohomologies of the blow-up and then Theorem \ref{thm:blowup}. Compare also \cite[Theorem 1.3]{yang-yang} for similar results using Bott-Chern cohomology, and \cite[Corollary 25]{stelzig-doublecomplex} for a clear statement and argument.

Let $X$ be a compact complex manifold of  dimension $n$ and $Z$  a closed complex submanifold of codimension $k$. Also let   $\tau \colon \tilde X:=\tilde X_Z \to X$ be the blow-up of $X$ along  $Z$ with exceptional divisor $\exc=\mathbb{P}(N_{Z|X})$.
Here we assume that 
\begin{equation}\label{htnbd}
\text{$Z$ admits a holomorphically contractible neighbourhood}
\end{equation}
 that is, there exists $U\supset Z$ with $r \colon U \to Z$ holomorphic and $r|_Z=\mathrm{id}$. In this case $\exc$ also admits a holomorphically contractible neighbourhood $\tilde U\supset \exc$ with $\tilde r \colon \tilde U \to \exc$ holomorphic and $\tilde r|_\exc=\mathrm{id}$.
Thus we have the following diagram:
\begin{equation}\label{diagfund}
\SelectTips{cm}{}
\xymatrix
@R=.7cm
 {H^{p-k,q-k}_{\delbar}(Z)\ar[d]_{\bar\chi}&H^{p,q}_{\bar D}(X,X\setminus Z)\ar[d]^{\tau^{*}}\ar[l]_{\bar r_{*}}\\
H^{p-1,q-1}_{\delbar}(\exc) &H^{p,q}_{\bar D}(\tilde X,\tilde X\setminus \exc)\ar[l]_{\bar{\tilde r}_{*}},}
\end{equation}
where the horizontal arrows are the $\delbar$-integrations along the fibres, $\tau^{*}$ is the morphism
induced by $\tau$ and $\bar\chi$ is given by $z\mapsto a^{k-1}(Q)\cdot \tau_{\exc}^{*}z$, see the proof below for details. Here we spend some words to clarify the heavy notation: accordingly with \cite{suwa-ASPM}, the bar refers to the holomorphic aspects of the theory, while the tilde concerns to the level of the blow-up.

We do not know whether or not the diagram \eqref{diagfund} is commutative. The first condition in \eqref{eq:technical} below
is apparently weaker than the commutativity
(cf. Remark \ref{rmk:assumptions-special-cases}.\,(5) below).

\begin{theorem}\label{thm:blowup} Let $X$ be a compact complex manifold and $Z$  a closed  submanifold of $X$. Also let $\tau \colon \tilde X_Z \to X$ be the blow-up of $X$ along  $Z$. Assume that the conditions \eqref{htnbd} above and
\begin{equation}\label{eq:technical}
\op{im}\bar{\tilde r}_{*}\circ\tau^{*}\subset\op{im}\bar\chi,\qquad\op{ker}\bar{\tilde r}_{*}\subset\op{im}\tau^{*}
\end{equation}
hold. 
Then, if both $X$ and $Z$ admit a Hodge structure, so does $\tilde X_Z$.
\end{theorem}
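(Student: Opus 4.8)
The plan is to reduce the statement to a \emph{Dolbeault blow-up formula}
\[
H^{p,q}_{\delbar}(\tilde X_Z)\;\cong\; H^{p,q}_{\delbar}(X)\oplus\bigoplus_{i=1}^{k-1}H^{p-i,q-i}_{\delbar}(Z)
\]
and then to combine it with the classical de Rham blow-up formula together with the hypothesis that $X$ and $Z$ satisfy the $\del\delbar$-Lemma. First I would write down the two long exact sequences \eqref{lexcd} associated to the pairs $(X,X\setminus Z)$ and $(\tilde X,\tilde X\setminus E)$, and compare them through the natural map $\tau^{*}$ in relative \v{C}ech--Dolbeault cohomology recalled above. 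Since $\tau$ restricts to a biholomorphism $\tilde X\setminus E\xrightarrow{\sim}X\setminus Z$, the vertical maps over the open parts $U_{0}=X\setminus Z$ and $\tilde U_{0}=\tilde X\setminus E$ are isomorphisms; hence, by the five lemma, the comparison of $H^{p,q}_{\delbar}(\tilde X)$ with $H^{p,q}_{\delbar}(X)$ is entirely controlled by the behaviour of $\tau^{*}\colon H^{p,q}_{\bar D}(X,X\setminus Z)\to H^{p,q}_{\bar D}(\tilde X,\tilde X\setminus E)$ on the relative groups.

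Next I would identify these relative groups via the Dolbeault--Thom machinery of Subsection \ref{subsec:d-thom}, using the holomorphically contractible neighbourhoods provided by \eqref{htnbd}. The Thom morphisms split them as
\[
H^{p,q}_{\bar D}(X,X\setminus Z)=\mathrm{im}\,\bar T_{Z}\oplus\ker\bar r_{*},\qquad
H^{p,q}_{\bar D}(\tilde X,\tilde X\setminus E)=\mathrm{im}\,\bar T_{E}\oplus\ker\bar{\tilde r}_{*},
\]
with $\mathrm{im}\,\bar T_{Z}\cong H^{p-k,q-k}_{\delbar}(Z)$ and $\mathrm{im}\,\bar T_{E}\cong H^{p-1,q-1}_{\delbar}(E)$. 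Because $E\cong\mathbb P(N_{Z|X})$ and $Z$ satisfies the $\del\delbar$-Lemma, Proposition \ref{prop:dolbeault-projectivized} gives the projective-bundle decomposition
\[
H^{p-1,q-1}_{\delbar}(E)\;\cong\;\bigoplus_{i=0}^{k-1}H^{p-1-i,q-1-i}_{\delbar}(Z),
\]
whose top summand $i=k-1$ is exactly $H^{p-k,q-k}_{\delbar}(Z)$. I would then check that $\tau^{*}$ carries $\mathrm{im}\,\bar T_{Z}$ isomorphically onto this top summand, so that $\tau^{*}$ is injective on $\mathrm{im}\,\bar T_{Z}$, with cokernel $\bigoplus_{i=1}^{k-1}H^{p-i,q-i}_{\delbar}(Z)$.

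It is here that the technical assumption \eqref{eq:technical} is used. The inclusion $\tau^{*}(\ker\bar r_{*})\subset\ker\bar{\tilde r}_{*}$ makes $\tau^{*}$ compatible with the two $\delbar$-integrations along the fibres, so that it descends to the inclusion of $H^{p-k,q-k}_{\delbar}(Z)$ as the top graded piece in the projective-bundle decomposition; the inclusion $\ker\bar{\tilde r}_{*}\subset\ker j^{*}$ guarantees that the summand $\ker\bar{\tilde r}_{*}$ dies under $j^{*}$, so that the image of the relative group in $H^{p,q}_{\delbar}(\tilde X)$ is governed solely by $\mathrm{im}\,\bar T_{E}$. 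With these two facts one controls the connecting homomorphisms in \eqref{lexcd} and splits the long exact sequences into short exact sequences
\[
0\to H^{p,q}_{\delbar}(X)\xrightarrow{\tau^{*}}H^{p,q}_{\delbar}(\tilde X)\to\bigoplus_{i=1}^{k-1}H^{p-i,q-i}_{\delbar}(Z)\to 0,
\]
yielding the Dolbeault blow-up formula. Performing this diagram chase rigorously --- establishing injectivity of $\tau^{*}$, the vanishing of the relevant connecting maps, and the precise identification of the cokernel --- is the step I expect to be the main obstacle, and it is exactly what \eqref{eq:technical} is designed to make feasible.

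Finally I would conclude by a Hodge-theoretic count. The same geometric construction produces the classical de Rham blow-up formula $H^{\ell}_{dR}(\tilde X)\cong H^{\ell}_{dR}(X)\oplus\bigoplus_{i=1}^{k-1}H^{\ell-2i}_{dR}(Z)$, which the Dolbeault formula above refines according to bidegree. Since $X$ and $Z$ satisfy the $\del\delbar$-Lemma, each carries a Hodge structure with $\sum_{p+q=\ell}h^{p,q}_{\delbar}=b_{\ell}$ and $H^{p,q}_{\delbar}=\overline{H^{q,p}_{\delbar}}$; as both blow-up decompositions are assembled from the same maps and the shifts $(p-i,q-i)$ preserve complex conjugation, the Dolbeault formula gives $\sum_{p+q=\ell}h^{p,q}_{\delbar}(\tilde X)=b_{\ell}(\tilde X)$, hence degeneration of the Fr\"olicher spectral sequence, together with the symmetry $H^{p,q}_{\delbar}(\tilde X)=\overline{H^{q,p}_{\delbar}(\tilde X)}$ and the decomposition $H^{\ell}_{dR}(\tilde X)=\bigoplus_{p+q=\ell}H^{p,q}_{\delbar}(\tilde X)$. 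By \cite{deligne-griffiths-morgan-sullivan} this pure Hodge structure on $H^{\bullet}_{dR}(\tilde X_{Z})$ is equivalent to the $\del\delbar$-Lemma.
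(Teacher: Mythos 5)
Your proposal follows essentially the same route as the paper: comparing the two relative \v{C}ech--Dolbeault long exact sequences via $\tau^{*}$, splitting the relative groups by the Dolbeault--Thom morphisms under hypothesis \eqref{htnbd}, using the relation $\tau^{*}\bar\Psi_{Z}=\bar\Psi_{E}\smallsmile\tilde r^{*}a^{k-1}(Q)$ together with Proposition \ref{prop:dolbeault-projectivized} to identify the image of $H^{p-k,q-k}_{\delbar}(Z)$ as the top piece of $H^{p-1,q-1}_{\delbar}(E)$, and invoking \eqref{eq:technical} exactly where the paper does. The only difference is packaging: the paper states the outcome as a pushout and concludes by universality/naturality of the induced Hodge structures, whereas you extract the equivalent explicit direct-sum blow-up formula and conclude by Fr\"olicher degeneration plus conjugation symmetry of the resulting subspaces of $H^{\bullet}_{dR}(\tilde X_Z)$ --- which amounts to the same verification of \cite[5.21]{deligne-griffiths-morgan-sullivan}.
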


\begin{proof}
\noindent{\itshape Algebraic preliminaries.}
We quote the following lemma, see for instance \cite[Lemme II.6]{blanchard}:

\begin{lemma}\label{lemalg} Let $R$ be a commutative ring with unity and let
\[
\SelectTips{cm}{}
\xymatrix
{A_{1}\ar[r]\ar@{->>}[d]^-{f_{1}}& A_{2}\ar[r]\ar@{>->}[d]^-{f_{2}}& A_{3}\ar[r]\ar[d]^-{f_{3}}& A_{4}\ar[r]\ar[d]^-{f_{4}}_{\wr}& A_{5}\ar@{>->}[d]^-{f_{5}}\\
 B_{1}\ar[r] & B_{2}\ar[r]^-{g}& B_{3}\ar[r]& B_{4}\ar[r]& B_{5}}
\]
be a commutative diagram of $R$-modules with exact rows \st\ 
$f_{1}$ is surjective, $f_{2}$ and $f_{5}$ are injective and $f_{4}$ is an \iso.
Then $f_{3}$ is injective and $g$ induces an \iso
\[
\tilde g:B_{2}/f_{2}A_{2}\overset\sim\lra B_{3}/f_{3}A_{3}.
\]
\end{lemma}

In the above situation, we have  the diagram with an exact row:
\[
\SelectTips{cm}{}
\xymatrix
@C=.7cm
@R=.6cm
{0\ar[r]&A_{3}\ar[r]^{f_{3}}&B_{3}\ar[r]^-{\pi}& B_{3}/f_{3}A_{3}\ar[r]&0\\
{} &{}&{} & B_{2}/f_{2}A_{2}\ar[u]^-{\wr}_{\tilde g}\ar@{.>}[ul]^{\eta},}
\]
where $\pi$ is the canonical surjection.
If there is a splitting $\eta:B_{2}/f_{2}A_{2}\ra B_{3}$,  i.e., a morphism with $(\tilde g)^{-1}\circ\pi\circ\eta=\op{id}$, we have an \iso
\[
A_{3}\oplus (B_{2}/f_{2}A_{2})\overset\sim\lra B_{3},\qquad (a,[b])\mapsto f_{3}(a)+\eta ([b]).
\]
Note that the \iso\ depends on the splitting.

In the sequel, we try to express the cohomology of $\tilde X$ in terms of those of $X$ and $Z$ using the above.

\medskip
\noindent{\itshape de Rham cohomology.}
Let us start with the de Rham case.
Note that, for this case, the assumption \eqref{htnbd}
 (or \eqref{eq:technical}) is not necessary; for the map $r$, simply take the one given by the Tubular Neighbourhood Theorem, although it is only smooth that is sufficient.

Considering the  exact sequence \eqref{lexcdR} for the pairs $(X,X \setminus Z)$ and $(\tilde X, \tilde X \setminus \exc)$,
we have the commutative diagram with exact rows:
\[
{\resizebox{\textwidth}{!}{
\SelectTips{cm}{}
\xymatrix
{H^{h-1}_{dR}(X\setminus Z)\ar[r]^-{\delta}\ar[d]^-{\tau^*}_-{\wr} & H^{h}_{D}(X,X\setminus Z)\ar[r] ^-{j^*}\ar[d]^-{\tau^*} &H^h_{dR}(X)\ar[r]^-{i^{*}}\ar[d]^-{\tau^*} & H^h_{dR}(X\setminus Z)\ar[r]^-{\delta}\ar[d]^-{\tau^*}_-{\wr} & H^{h+1}_{D}(X,X\setminus Z)\ar[d]^-{\tau^*} \\
H^{h-1}_{dR}(\tilde X \setminus \exc)\ar[r]^-{\delta} & H^{h}_{D}(\tilde X,\tilde X \setminus \exc)\ar[r]^-{j^*} & H^h_{dR}(\tilde X)\ar[r]^-{i^{*}}& H^h_{dR}(\tilde X \setminus \exc)\ar[r]^-{\delta} & H^{h+1}_{D}(\tilde X,\tilde X\setminus \exc).}}}
\]

We study the morphism $\tau^* \colon H^{\bullet}_{D}(X,X\setminus Z)\to H^{\bullet}_{D}(\tilde X,\tilde X\setminus \exc)$ more closely. First, it is injective by \cite[Theorem 3.2]{tardini} and  Lemma \ref{lemalg}
shows that $\tau^{*}:H^h_{dR}(X)\ra H^h_{dR}(\tilde X)$ is injective (in fact this is already implied by \cite[Theorem 3.1]{wells}) and that $j^{*}$ in the second row induces an \iso

\begin{equation}\label{dR1}
H^{h}_{D}(\tilde X,\tilde X \setminus \exc)/\tau^{*}H^{h}_{D}(X,X\setminus Z)\overset\sim\lra 
H^h_{dR}(\tilde X)/\tau^{*}H^h_{dR}(X).
\end{equation}
We try to express the left hand side in terms of the cohomologies of $Z$ and $\exc$ and along the way
we reprove the injectivity of $\tau^{*}$ on the relative cohomology (cf. Remark \ref{rmk:assumptions-special-cases}.\,(1) below).

Let $\pi:N:=N_{Z|X}\rightarrow Z$ be the normal bundle of $Z$ in $X$.
Recall that $\exc$ is  the projectivization $\mathbb{P}(N)$ of $N$ and that $\tau_{\exc}:=\tau|_{\exc} \colon \exc=\mathbb{P}(N)\rightarrow Z$ is the projection of the bundle.
The normal bundle of $\exc$ in $\tilde X$ is the tautological bundle $\tilde\pi \colon T\rightarrow \exc=\mathbb{P}(N)$. It is a subbundle of $\tau_\exc^{*}N$ with the universal bundle $Q$ as the quotient so that we have an exact sequence of vector bundles on $\exc$ (cf. \eqref{exactuniv0}):
\begin{equation}\label{exactuniv}
0\lra T\overset\iota\lra \tau_\exc^{*}N\lra  Q\lra 0.
\end{equation}
Recall that 
$\tau_\exc^*N=\{\,(\nu,e)\in N\times \exc\mid \pi(\nu)=\tau_\exc(e)\,\}$
so that we have the commutative diagram
\[
\SelectTips{cm}{}
\xymatrix
@R=.7cm
{  \exc\ar[d]_-{\tau_\exc}& \tau_\exc^{*}N\ar[l]_{\varpi}\ar[d]^-{p}\\
Z & N\ar[l]_{\pi},}
\]
where $p$ and $\varpi$ denote the restrictions of the projections onto the first and the second factors, respectively.

Let $\varphi:U\overset\sim\rightarrow W$ be a diffeomorphism as given by the Tubular Neighbourhood Theorem, with $U$ and $W$ neighbourhoods of $Z$
in $X$ and $N$, respectively. We set $r=\pi\circ\varphi:U\rightarrow Z$. We may choose neighbourhoods $\tilde U$ and $\tilde W$ of $\exc$ in $\tilde X$ and $T$ and a diffeomorphism $\tilde\varphi:\tilde U\overset\sim\rightarrow \tilde W$ so that $\tilde U=\tau^{-1}U$ and $\varphi\circ\tau\circ(\tilde\varphi)^{-1}:\tilde W\rightarrow W$ is equal to $p\circ\iota|_{\tilde W}$.
We set $\tilde r=\tilde\pi\circ\tilde\varphi:\tilde U\rightarrow \exc$ so that we have the commutative diagram
\[
\SelectTips{cm}{}
\xymatrix
@R=.7cm
{  \exc\ar[d]_-{\tau_\exc}& \tilde U\ar[l]_{\tilde r}\ar[d]^-{\tau|_{\tilde U}}\\
Z & U\ar[l]_{r}.}
\]

We have the Thom class $\Psi_{Z}\in H^{2k}_{D}(X,X\setminus Z)= H^{2k}_{D}(U,U\setminus Z)$ of $Z$ and that $\Psi_{\exc}\in H^{2}_{D}(\tilde X,\tilde X\setminus \exc)= H^{2}_{D}(\tilde U,\tilde U\setminus \exc)$ of $\exc$.

\begin{lemma}\label{thomdR} In the above situation, we have
\[
\tau^{*}\Psi_{Z}=\Psi_{\exc}\smallsmile \tilde r^{*}c^{k-1}(Q),
\]
where $c^{k-1}(Q)$ is the 
top Chern class of $Q$.
\end{lemma}
\begin{proof}[Proof of Lemma \ref{thomdR}] Noting that $r\circ\tau=\tau_\exc\circ\tilde r$, we have the exact sequence of vector bundles on $\tilde U$:
\begin{equation}\label{exact}
0\lra \tilde r^{*}T\lra\tau^{*}r^{*}N\lra \tilde r^{*} Q\lra 0.
\end{equation}
Let $s_{\Delta}$ and $\tilde s_{\Delta}$ denote the diagonal sections of $\pi^{*}N$ on $N$ and of $\tilde\pi^{*}T$
on $T$, respectively. We denote the corresponding sections of $r^{*}N$ on $U$ and of $\tilde r^{*}T$ 
on $\tilde U$ by $s$ and $\tilde s$. We claim that $\tilde s$ is mapped to $\tau^{*}s$ by the first morphism above. To see  this, first note that $s_{\Delta}(\nu)=(\nu,\nu)$, where we think of the first component as the fibre
component. The section $s$ is given by, for $x\in U$, $s(x)=\varphi(x)\in (r^{*}N)_{x}=N_{z}$, $z=r(x)=\pi\circ\varphi(x)$. On the other hand $\tilde s_{\Delta}(t)=(t,t)$ and $\tilde s$ is given by, for $\tilde x\in\tilde U$,
$\tilde s(\tilde x)=\tilde\varphi(\tilde x)\in (\tilde r^{*}T)_{\tilde x}=T_{e}$, $e=\tilde r(\tilde x)=\tilde\pi\circ\tilde\varphi(\tilde x)$. We have $\tau^{*}s(\tilde x)=s(\tau(\tilde x))=\varphi\circ\tau(\tilde x)
=p\circ\iota\circ\tilde\varphi(\tilde x)$, which proves the claim.

Recall that $\Psi_{Z}$ is the localization of $c^{k}(r^{*}N)$ by $s$ so that $\tau^{*}\Psi_{Z}$ is the localization of $c^{k}(\tau^{*}r^{*}N)$ by $\tau^{*}s$. The latter can be described as follows.
Let $\tilde\nabla_{0}$ be an $\tilde s$-trivial connection for $\tilde r^{*}T$ on $\tilde U_{0}$ and let $\nabla^{Q}$
be a connection for $Q$ on $\exc$. Then there exists a $\tau^{*}s$-trivial connection $\nabla_{0}$ for $\tau^{*}r^{*}N$ on $\tilde U_{0}$ such that $(\tilde\nabla_{0},\nabla_{0},\tilde r^{*}\nabla^{Q})$ is compatible with  \eqref{exact} on
$\tilde U_{0}$. Let $\tilde\nabla_{1}$ be an arbitrary connection for $\tilde r^{*}T$ on $\tilde U$. Then there exists a  connection $\nabla_{1}$ for $\tau^{*}r^{*}N$ on $\tilde U$ such that $(\tilde\nabla_{1},\nabla_{1},\tilde r^{*}\nabla^{Q})$ is compatible with  \eqref{exact} on
$\tilde U$. Then $\tau^{*}\Psi_{Z}$ is represented by
\[
(c^{k}(\nabla_{1}),c^{k}(\nabla_{0},\nabla_{1}))=(c^{1}(\tilde\nabla_{1})\cdot \tilde r^{*}c^{k-1}(\nabla^{Q}),
c^{1}(\tilde\nabla_{0},\tilde\nabla_{1})\cdot \tilde r^{*}c^{k-1}(\nabla^{Q}).
\]
Since $(c^{1}(\tilde\nabla_{1}),
c^{1}(\tilde\nabla_{0},\tilde\nabla_{1}))$ represents $\Psi_{\exc}$, we have the lemma.
\end{proof}

From the above lemma, we see that the following diagram is commutative:
\begin{equation}\label{cor2.11}
\SelectTips{cm}{}
\xymatrix
@R=.7cm
{H^{h-2k}_{dR}(Z) \ar[d]_-{\chi}\ar@<0.5ex>[r]^-{T_{Z}}&H^{h}_{D}(X,X\setminus Z)\ar@<0.5ex>[l]^-{r_{*}}\ar[d]^-{\tau^{*}}\\
H^{h-2}_{dR}(\exc) \ar@<0.5ex>[r]^-{T_{\exc}}&H^{h}_{D}(\tilde X,\tilde X\setminus \exc)\ar@<0.5ex>[l]^-{\tilde r_{*}},}
\end{equation}
where $\chi$ is the morphism given by $z\mapsto c^{k-1}(Q)\smallsmile \tau_\exc^{*}z$. In the above $T_{Z}$ and
$r_{*}$ are isomorphisms and the inverses of each other, similarly for $T_{\exc}$ and
$\tilde r_{*}$. 
Thus $\chi$ is injective and $T_{\exc}$ 
induces an \iso
\begin{equation}\label{dR2}
H^{h-2}_{dR}(\exc)/\chi H^{h-2k}_{dR}(Z)\overset\sim\lra  
H^{h}_{D}(\tilde X,\tilde X\setminus \exc)/\tau^{*}H^{h}_{D}(X,X\setminus Z).
\end{equation}

Now we study the left hand side. We claim that $H^{\bullet}_{dR}(\exc)$ is a free $H^{\bullet}_{dR}(Z)$-module with basis $1,\gamma,\dots,\gamma^{k-2},c^{k-1}(Q)$, $\gamma=c^{1}(T)$.
To see this,  from \eqref{exactuniv} we have the relation $c(T)\cdot c(Q)=\tau_\exc^*c(N)$ among the total Chern
classes. Thus $c(Q)=c(T)^{-1}\cdot \tau_\exc^*c(N)$ and we have 
\begin{equation}\label{taut-univ}
c^{k-1}(Q)=\sum_{i=0}^{k-2}(-1)^{i}\gamma^{i}\cdot\tau_\exc^*c^{k-1-i}(N)+(-1)^{k-1}\gamma^{k-1},
\end{equation}
which proves the claim in view of Proposition \ref{prop:deRham-projectivized}. Thus we have
\begin{equation}\label{isodRsub}
H^{h-2}_{dR}(\exc)/\chi H^{h-2k}_{dR}(Z)\simeq\bigoplus _{i=0}^{k-2}\gamma^{i}\cdot\tau_\exc^* H^{h-2i-2}_{dR}(Z)\subset H^{h-2}_{dR}(\exc).
\end{equation}

By \eqref{dR1}, \eqref{dR2} and \eqref{isodRsub}, we have the diagram:
\[
\SelectTips{cm}{}
\xymatrix
@C=.7cm
@R=.6cm
{0\ar[r]&H^{h}_{dR}(X)\ar[r]^-{\tau^{*}}&H^{h}_{dR}(\tilde X)\ar[r]^-{\pi}& H^{h}_{dR}(\tilde X)/\tau^{*}H^{h}_{dR}(X)\ar[r]&0\\
{} &{}&{} & \bigoplus _{i=0}^{k-2}\gamma^{i}\cdot\tau_\exc^* H^{h-2i-2}_{dR}(Z)\ar[u]_-{\wr}\ar[ul]_{\eta}.}
\]
The restriction of the Gysin morphism $(i_{\exc})_{*}=j^{*}\circ T_{\exc}:H^{h-2}_{dR}(\exc)\ra H^{h}_{dR}(\tilde X)$ gives a splitting $\eta$ and we have:
\begin{proposition}\label{isodR} There is an \iso
\[
H^{h}_{dR}(X)\oplus\bigoplus_{i=0}^{k-2}H^{h-2i-2}_{dR}(Z)\overset\sim\lra H^{h}_{dR}(\tilde X),
\]
which is given by $(x,(z_{i})_{i=0}^{k-2})\mapsto \tau^{*}x+\sum_{i=0}^{k-2}(i_{\exc})_{*}(\gamma^{i}\cdot\tau_{\exc}^{*}z_{i})$ for $x\in H^{h}_{dR}(X)$ and $z_{i}\in H^{h-2i-2}_{dR}(Z)$.
\end{proposition}

\medskip
\noindent{\itshape Dolbeault cohomology.}
Considering the  exact sequence \eqref{lexcd} for the pairs $(X,X \setminus Z)$ and $(\tilde X, \tilde X \setminus \exc)$,
we have the commutative diagram with exact rows:
\[
{\resizebox{\textwidth}{!}{
\SelectTips{cm}{}
\xymatrix
{H^{p,q-1}_{\delbar}(X\setminus Z)\ar[r]^-{\delta}\ar[d]^{\tau^*}_{\wr} & H^{p,q}_{\bar D}(X,X\setminus Z)\ar[r] ^-{j^*}\ar[d]^{\tau^*} &H^{p,q}_{\delbar}(X)\ar[r]^-{i^{*}}\ar[d]^{\tau^*} & H^{p,q}_{\delbar}(X\setminus Z)\ar[r]^-{\delta}\ar[d]^-{\tau^*}_{\wr} & H^{p,q+1}_{\bar D}(X,X\setminus Z)\ar[d]^{\tau^*} \\
H^{p,q-1}_{\delbar}(\tilde X \setminus \exc)\ar[r]^-{\delta} & H^{p,q}_{\bar D}(\tilde X,\tilde X \setminus \exc)\ar[r]^-{j^*} & H^{p,q}_{\delbar}(\tilde X)\ar[r]^-{i^{*}}& H^{p,q}_{\delbar}(\tilde X \setminus \exc)\ar[r]^-{\delta} & H^{p,q+1}_{\bar D}(\tilde X,\tilde X\setminus \exc).}}}
\]

The essential difference from the de~Rham case occurs for the relative cohomology and 
 the morphism $\tau^{*}:H^{p,q}_{\bar D}(X,X\setminus Z)\rightarrow H^{p,q}_{\bar D}(\tilde X,\tilde X\setminus E)$, which  we are going to analyze. First, it is injective by \cite[Theorem 3.1]{tardini} and  Lemma \ref{lemalg}
shows that $\tau^{*}:H^{p,q}_{\delbar}(X)\ra H^{p,q}_{\delbar}(\tilde X)$ is injective (again this is already implied by \cite[Theorem 3.1]{wells}) and that $j^{*}$ in the second row induces an \iso

\begin{equation}\label{Dol1}
H^{p,q}_{\bar D}(\tilde X,\tilde X \setminus \exc)/\tau^{*}H^{p,q}_{\bar D}(X,X\setminus Z)\overset\sim\lra 
H^{p,q}_{\delbar}(\tilde X)/\tau^{*}H^{p,q}_{\delbar}(X).
\end{equation}
We try to express the left hand side in terms of  cohomologies of $Z$ and $\exc$.

Recall that the normal bundle $\pi:N\rightarrow Z$ of $Z$ is a holomorphic vector bundle
of rank $k$ on $Z$.  By the assumption \eqref{htnbd}, we see that
 there exist neighbourhoods $U$ and $W$ of $Z$ in $X$ and
$N$, respectively, and a biholomorphic map $\varphi:U\rightarrow W$ so that $r=\pi\circ\varphi:U\to Z$.
Thus we have isomorphisms
\[
H^{p,q}_{\bar D}(X,X\setminus Z)\simeq H^{p,q}_{\bar D}(U,U\setminus Z)\underset{\varphi^{*}}{\overset\sim\longleftarrow} H^{p,q}_{\bar D}(W,W\setminus Z)\simeq H^{p,q}_{\bar D}(N,N\setminus Z),
\]
where the first and the last isomorphisms are excisions. The $\delbar$-Thom class $\bar\Psi_{Z}$ of $Z$ is, by definition, the class in $H^{k,k}_{\bar D}(X,X\setminus Z)$ that corresponds to $\bar\Psi_{N}$ by the above isomorphism. 
We have the $\delbar$-Thom morphism
\[
\bar T_{Z}:H^{p-k,q-k}_{\delbar}(Z)\lra H^{p,q}_{\bar D}(U,U\setminus Z)=H^{p,q}_{\bar D}(X,X\setminus Z),
\]
which is given by $z\mapsto\bar\Psi_{Z}\smallsmile r^{*}z$. It gives a splitting of
\[
0\lra\operatorname{ker}\bar r_{*}\lra H^{p,q}_{\bar D}(X,X\setminus Z)\overset{\bar r_{*}}\lra H^{p-k,q-k}_{\delbar}(Z)\lra 0.
\]

Under the assumption \eqref{htnbd}, $\exc$ also admits a holomorphic retraction $\tilde r\colon \tilde U\rightarrow \exc$, $\tilde U=\tau^{-1}U$, such that the following diagram is commutative:
\[
\SelectTips{cm}{}
\xymatrix
@R=.7cm
{\exc\ar[d]_-{\tau_\exc}&\tilde U\ar[l]_{\tilde r}\ar[d]^-{\tau|_{\tilde U}}\\
Z&U\ar[l]_{r}.}
 \]
Thus we have 
 the $\delbar$-Thom class  $\bar\Psi_{\exc}\in H^{1,1}_{\bar D}(\tilde X,\tilde X\setminus \exc)= H^{1,1}_{\bar D}(\tilde U,\tilde U\setminus \exc)$ of $\exc$ and the $\delbar$-Thom morphism
\[
\bar T_{\exc}:H^{p-1,q-1}_{\delbar}(\exc)\lra H^{p,q}_{\bar D}(\tilde U,\tilde U\setminus\tilde  Z)=H^{p,q}_{\bar D}(\tilde X,\tilde X\setminus \exc),
\]
which is given by $a\mapsto\bar\Psi_{\exc}\smallsmile \tilde r^{*}a$. It gives a splitting of
\[
0\lra \operatorname{ker}\bar{\tilde  r}_{*}\lra H^{p,q}_{\bar D}(\tilde X,\tilde X\setminus \exc)\overset{\bar{\tilde  r}_{*}}\lra H^{p-1,q-1}_{\delbar}(\exc)\lra0.
\]

We have the following lemma, which is the holomorphic analogue of Lemma~\ref{thomdR} and is proven by the same argument with de Rham cohomology and Chern classes are replaced by Dolbeault cohomology and Atiyah classes, respectively:
\begin{lemma}\label{thpbD} We have:
\[
\tau^{*}\bar\Psi_{Z}=\bar\Psi_{\exc}\smallsmile \tilde r^{*}a^{k-1}(Q),
\]
where $a^{k-1}(Q)$ denotes
the top Atiyah class of $Q$.
\end{lemma} 

From the above lemma, we see that the following diagrams are commutative:
\[
\SelectTips{cm}{}
\xymatrix
@R=.7cm
{H^{p-k,q-k}_{\delbar}(Z)\ar[r]^-{\bar T_{ Z}} \ar[d]_-{\bar\chi}&H^{p,q}_{\bar D}(X,X\setminus Z)\ar[d]^-{\tau^{*}}\\
H^{p-1,q-1}_{\delbar}(\exc) \ar[r]^-{\bar T_{ \exc}}&H^{p,q}_{\bar D}(\tilde X,\tilde X\setminus \exc),}\qquad
 \xymatrix
@R=.7cm
{H^{p-k,q-k}_{\delbar}(Z) \ar[r]^-{\bar T_{ Z}}\ar[d]_-{\bar\chi}&H^{p,q}_{\bar D}(X,X\setminus Z)\ar[d]^-{\tau^{*}}\\
H^{p-1,q-1}_{\delbar}(\exc)& H^{p,q}_{\bar D}(\tilde X,\tilde X\setminus \exc)\ar[l]_{\bar{\tilde r}_{*}},}
\]
where $\bar\chi$ is the morphism given by  $z\mapsto a^{k-1}(Q)\smallsmile\tau_{\exc}^{*}z$. 

From the first commutative diagram above, $\bar T_{\tilde Z}$ induces a
well-defined morphism
\[
\psi:H^{p-1,q-1}_{\delbar}(\exc)/\bar\chi H^{p-k,q-k}_{\delbar}(Z)\lra 
H^{p,q}_{\bar D}(\tilde X,\tilde X\setminus \exc)/\tau^{*}H^{p,q}_{\bar D}(X,X\setminus Z).
\]

\begin{proposition}\label{propaim} Under the assumption \eqref{eq:technical},
$\psi$ is an \iso.
\end{proposition}
\begin{proof}[Proof of Proposition \ref{propaim}] For the surjectivity, take $[\tilde c]$, $\tilde c \in H^{p,q}_{\bar D}(\tilde X,\tilde X\setminus \exc)$.
We may write $\tilde c=\bar T_{\tilde Z}(\tilde a)+\rho$, $\rho\in \op{ker}\bar{\tilde r}_{*}$. Thus
by the second condition in \eqref{eq:technical}, $\psi([\tilde a])=[\tilde c]$. For the injectivity, take $[\tilde a]$ \st\ $T_{\tilde Z}(\tilde a)=
\tau^{*}(c)$ for some $c\in H^{p,q}_{\bar D}(X,X\setminus Z)$. Then $\tilde a=\bar{\tilde r}_{*}\circ T_{\tilde Z}(\tilde a)=\bar{\tilde r}_{*}\circ\tau^{*}(c)$ and by the first condition in \eqref{eq:technical}, $\tilde a\in\op{im}\bar\chi$.
\end{proof}

As in the case of de Rham, we see that $H^{\bullet,\bullet}_{\delbar}(\exc)$ is a free $H^{\bullet,\bullet}_{\delbar}(Z)$-module with basis $1,\alpha,\dots,\alpha^{k-2},a^{k-1}(Q)$, $\alpha=a^{1}(T)$.
Thus we have
\begin{equation}\label{isoDolsub}
H^{p-1,q-1}_{\delbar}(\exc)/\bar\chi H^{p-k,q-k}_{\delbar}(Z)\simeq\bigoplus _{i=0}^{k-2}\alpha^{i}\cdot\tau_\exc^* H^{p-i-1,q-i-1}_{\delbar}(Z)\subset H^{p-1,q-1}_{\delbar}(\exc).
\end{equation}

Under the assumption \eqref{eq:technical},
the restriction of the $\delbar$-Gysin morphism $(\bar i_{\exc})_{*}=j^{*}\circ \bar T_{\exc}:H^{p-1.q-1}_{\delbar}(\exc)\ra H^{p,q}_{\delbar}(\tilde X)$ gives a splitting $\eta$: 
\[
\SelectTips{cm}{}
\xymatrix
@C=.7cm
@R=.6cm
{0\ar[r]&H^{p,q}_{\delbar}(X)\ar[r]^-{\tau^{*}}&H^{p,q}_{\delbar}(\tilde X)\ar[r]^-{\pi}& H^{p,q}_{\delbar}(\tilde X)/\tau^{*}H^{p,q}_{\delbar}(X)\ar[r]&0\\
{} &{}&{} & \bigoplus _{i=0}^{k-2}\alpha^{i}\cdot\tau_\exc^* H^{p-i-1,q-i-1}_{\delbar}(Z)\ar[u]_-{\wr}\ar[ul]_{\eta}.}
\]
and we have:
\begin{proposition}\label{isoDol} Under the assumption \eqref{eq:technical}, there is an \iso
\[
H^{p,q}_{\delbar}(X)\oplus\bigoplus_{i=0}^{k-2}H^{p-i-1,q-i-1}_{\delbar}(Z)\overset\sim\lra H^{p,q}_{\delbar}(\tilde X),
\]
which is given by $(x,(z_{i})_{i=0}^{k-2})\mapsto \tau^{*}x+\sum_{i=0}^{k-2}(\bar i_{\exc})_{*}(\alpha^{i}\cdot\tau_{\exc}^{*}z_{i})$ for $x\in H^{p,q}_{\delbar}(X)$ and $z_{i}\in H^{p-i-1,q-i-1}_{\delbar}(Z)$.
\end{proposition}

The theorem follows from  Propositions \ref{isodR} and \ref{isoDol} (cf. Definition \ref{defH}.\,1).
\end{proof}

\begin{remark}\label{rmk:assumptions-special-cases}
(1) Even if $X$ and $Z$ admit a natural Hodge structure, {\itshape i.e.}, satisfy the $\del\delbar$-Lemma, it is not clear, from the above arguments, whether or not $\tilde X$ has the same property. The problem is that the cohomology of $\exc$ contributes to the cohomology of $\tilde X$ through the Gysin morphisms and it is not clear if these morphisms send good representatives to good ones as in (H1) and (H2) of Definition \ref{defH}.\,2.

\smallskip

\noindent
(2) In view of the commutative diagram \eqref{cor2.11}, which is a consequence of Lemma \ref{thomdR}, the 
injectivity of $\tau^{*}$ on the relative cohomology is equivalent to that of $\chi$. 
From the definition of $\chi$, we see that this is also equivalent to the injectivity of $\tau_{\exc}^{*}$. The injectivity of $\chi$ 
can also be proven as follows, independently of the injectivity of $\tau^{*}$.

Recalling that $\tau_\exc \colon \exc=\mathbb{P}(N)\rightarrow Z$ is a $\mathbb{P}^{k-1}$-bundle, we have the integration along the fibres 
$(\tau_\exc)_{*} \colon H^{h-2}_{dR}(\exc)\rightarrow H^{h-2k}_{dR}(Z)$. 
First we claim that, for $(\tau_\exc)_* \colon H^{2k-2}_{dR}(\exc)\rightarrow H^{0}_{dR}(Z)$, we have
\begin{equation}\label{prop2.12}
(\tau_\exc)_*c^{k-1}(Q)=1.
\end{equation}
This can be seen from \eqref{taut-univ}, 
the projection formula
and the facts that 
$(\tau_\exc)_*\gamma^{i}=0$, for $i=0,\dots,k-2$, by dimension
reason, and 
$(\tau_\exc)_*\gamma^{k-1}=(-1)^{k-1}$, as $\gamma$ restricted to each fibre is the
first Chern class of the tautological bundle 
on $\mathbb{P}^{k-1}$.
Then by the projection formula and  \eqref{prop2.12}, 
\[
(\tau_\exc)_*\circ\chi(a)=(\tau_\exc)_*(c^{k-1}(Q)\smallsmile \tau_\exc^*(a))=(\tau_\exc)_*c^{k-1}(Q)\smallsmile a=a.
\]
Thus 
the composition $(\tau_\exc)_*\circ\chi$ is the identity morphism of $H^{h-2k}_{dR}(Z)$ so that
$\chi$
is injective. Thus $\tau_{\exc}^{*}$ is also injective. If $\exc$ is K\"ahler, this follows from  \cite[Theorem 4.1]{wells}.
\smallskip

\noindent
(3) The statement of Proposition \ref{isodR}  is proven in the K\"ahler context, {\itshape e.g.} in \cite[Theorem 7.31]{voisin-1} by excision and by the Thom isomorphism in cohomology with $\Z$-coefficients. Presumably, the K\"ahler condition is necessary there to show that $\chi$ or $\tau_{\exc}^{*}$ is injective using 
the above-mentioned theorem \cite[Theorem 4.1]{wells}.
The novelty here is the elimination of this restriction  by a  result of \cite{tardini} or Lemma~\ref{thomdR}, which
also gives a precise relation between the Thom classes of $Z$ and $\exc$
and this in turn gives a precise relation between $\tau^{*}$ and $\chi$.

This subject is treated in the algebraic category in \cite[\S \ 6.7]{fulton}.
\smallskip

\noindent
(4) In the Dolbeault case,
we can show the injectivity of $\bar\chi$  similarly as for $\chi$ (cf. (1) above). However this does not directly imply
the injectivity of $\tau^{*}$ on the relative cohomology.  The injectivity of $\bar\chi$ is equivalent to that of $\tau_{\exc}^{*} \colon H^{p-k,q-k}_{\delbar}(Z)\rightarrow H^{p-k,q-k}_{\delbar}(\exc)$.
If $\exc$ is K\"ahler, the latter again follows from  \cite[Theorem 4.1]{wells}.
 \smallskip

\noindent
(5) The condition regarding the holomorphically-contractible neighbourhood of $Z$ in Theorem \ref{thm:blowup} holds, for example, if $Z$ is a point (see Example \ref{example:blowup-point}), or if $X$ is a fibration ({\itshape e.g.} a Hopf manifold) with $Z$ a fibre.
 \smallskip

\noindent
(6) The first condition in \eqref{eq:technical} is implied by
 the commutativity of the diagram \eqref{diagfund},
which may be verified for the top degree cohomology using the projection formula.
\end{remark}

\begin{example}[{Blow-up in a point; see also \cite[Proposition 3.6]{yang-yang}}]\label{example:blowup-point}
The very particular case when $Z$ is a point is easier, and follows by the description of the Dolbeault cohomology in \cite{griffiths-harris}. For completeness we outline the proof in this situation.
Let $X$ be a compact complex manifold and consider $\tau \colon \tilde X \to X$ the blow-up of $X$ on a point $p$. If $X$ admits a Hodge structure, then also $\tilde X$ does.

We denote by $\exc=\mathbb{P}^{n-1}=\tau^{-1}(p)$ the exceptional divisor
of the blow-up.
We recall that the de Rham and Dolbeault cohomologies of $X$ and $\tilde X$ are related as follows (see \cite[pages 473-474]{griffiths-harris}): for $k\notin\left\lbrace 0\,,\,2n\right\rbrace$, for $(p,q)\notin\left\lbrace (0,0)\,,\,(n,n)\right\rbrace$,
$$
H^\bullet(\tilde X,\mathbb{C})=H^\bullet(X,\mathbb{C})\oplus H^\bullet(\exc,\mathbb{C})
$$
and
$$
H^{\bullet,\bullet}_{\overline\partial}(\tilde X)=H^{\bullet,\bullet}_{\overline\partial}(X)\oplus H^{\bullet,\bullet}_{\overline\partial}(\exc).
$$
In particular, $h^{p,p}(\tilde X)=h^{p,p}(X)+1$ and $h^{p,q}(\tilde X)=
h^{p,q}(X)$ for $p\neq q$.
Since, by hypothesis, $X$ satisfies the $\partial\overline\partial$-lemma and $\exc$ clearly does, we have that
\begin{eqnarray*}
H^k(\tilde X,\mathbb{C}) &=& H^k(X,\mathbb{C})\oplus H^k(\exc,\mathbb{C}) \\
&=& \bigoplus_{p+q=k}H^{p,q}_{\overline\partial}(X) \oplus \bigoplus_{r+s=k}H^{r,s}_{\overline\partial}(\exc) \\
&=& \bigoplus_{t+v=k}\left(H^{t,v}_{\overline\partial}(X)\oplus H^{t,v}_{\overline\partial}(\exc)\right) = \bigoplus_{t+v=k}H^{t,v}_{\overline\partial}(\tilde X)
\end{eqnarray*}
and
\begin{eqnarray*}
\overline{H^{p,q}_{\overline\partial}(\tilde X)} &=& \overline{H^{p,q}_{\overline\partial}(X)\oplus H^{p,q}_{\overline\partial}(\exc)} \\
&=& H^{q,p}_{\overline\partial}(X)\oplus H^{q,p}_{\overline\partial}(\exc) = H^{q,p}_{\overline\partial}(\tilde X) .
\end{eqnarray*}
\end{example}

\begin{question}\label{question:submanifold}
We ask whether {\itshape a submanifold of a manifold satisfying the $\partial\overline\partial$-Lemma, still satisfies the $\partial\overline\partial$-Lemma}. Note that, in general, existence of Hodge structures is not preserved by blow-ups: Claire Voisin suggested to us an example that appears in \cite{vuli-cras} by Victor Vuletescu: take the blow-up of a Hopf surface inside $\mathbb{S}^3\times\mathbb{S}^3\times\mathbb{P}^1$.
(Compare also \cite[Concluding Remarks]{yang-yang}.)
\end{question}

\begin{question}\label{question:normal-cone}
We ask whether {\itshape if $X$ and $Z$ satisfy the $\partial\overline\partial$-Lemma, then we can perform constructions like the deformation to the normal cone for $(X,Z)$ that still satisfies the $\partial\overline\partial$-Lemma}. We recall that the deformation to the normal cone by MacPherson \cite[Chapter 5]{fulton} allows to modify the pair $(X,Z)$ to the pair $(N_{Z|X},Z)$ as deformation, where clearly $Z$ has the property of admitting a holomorphically contractible neighbourhood in its normal bundle $N_{Z|X}$. We briefly recall the construction, see also \cite[Section 8]{suwa-ASPM}: consider a $1$-dimensional disc $\mathbb D$; define $X^*:=\mathrm{Bl}_{Z\times\{0\}}(X\times\mathbb{D}) \setminus \mathrm{Bl}_{Z\times\{0\}}(X \times\{0\})$ that provides a deformation path through $X^*_t=X$ to $X^*_0=N_{Z|X}$. We notice that $N_{Z|X}$ is clearly non-compact. We also recall that satisfying the $\partial\overline\partial$-Lemma is an open property under deformations \cite[Proposition 9.21]{voisin-1}, but in general it is not closed \cite{angella-kasuya-NWEJM}.
\end{question}

\begin{remark}\label{remark:corollary}
{\itshape If Questions \ref{question:normal-cone} and \ref{question:submanifold} have positive answers}, {\em if we can avoid the technical assumption \eqref{eq:technical}}, and {\em if we can prove the naturality of the induced Hodge structures on the blow-up}, then our argument would give that the $\partial\overline\partial$-Lemma property is defined inside the localization of the category of holomorphic maps with respect to bimeromorphisms, equivalently, modifications. More precisely:
{\itshape Let $f \colon M \to N$ be a bimeromorphic map between compact complex manifolds of the same dimension. Then $M$ satisfies the $\partial\overline\partial$-Lemma if and only if $N$ does.}
(The same would be true assuming $M$ K\"ahler without Question \ref{question:submanifold}.)
Indeed, this will follow by the Weak Factorization Theorem for bimeromorphic maps between compact complex manifolds \cite[Theorem 0.3.1]{akmw}, \cite{wlodarczyk}.
It states that $f$ can be functorially factored as a sequence of blow-ups and blow-downs with non-singular centres.
For a blow-up $\varphi\colon V' \to V''$, we have that: if $V'$ satisfies the $\partial\overline\partial$-Lemma, then $V''$ does by \cite[Theorem 5.22]{deligne-griffiths-morgan-sullivan}; if $V''$ satisfies the $\partial\overline\partial$-Lemma, then $V'$ does by Theorem \ref{thm:blowup}.
Compare \cite[Question 1.2]{rao-yang-yang} and \cite[Corollary 28]{stelzig-doublecomplex}.
\end{remark}

\section{The orbifold case}
We now consider the orbifold case, applying the Stelzig arguments to the orbifold Dolbeault cohomology studied in \cite{baily-PNAS, baily-AJM}.
Recall that an \emph{orbifold}, also called \emph{V-manifold} \cite{satake}, is a singular complex space whose singularities are locally isomorphic to quotient singularities $\mathbb C^n \slash G$, where $G\subset \mathrm{GL}(n, \mathbb C)$ is a finite subgroup. Tensors on an orbifold are defined to be locally $G$-invariant. In particular, this yields the notions of orbifold de Rham cohomology and orbifold Dolbeault cohomology, for which we have both a sheaf-theoretic and an analytic interpretation \cite{satake, baily-PNAS, baily-AJM}, and Hodge decomposition in cohomology defines the orbifold $\del\delbar$-Lemma property.

The following result generalizes the contents of Theorem \ref{thm:stelzig} to orbifolds {\em of global-quotient type}, namely, $X \slash G$, where $X$ is a complex manifold and $G$ is a finite group of biholomorphisms of $X$.
We can interpret this case as the smooth case with the further action of a group $G$: for example, an orbifold morphism $Z \slash H \to X \slash G$ is just an equivariant map $Z \to X$. The orbifold Dolbeault cohomology of $X \slash G$ is the cohomology of the complex of $G$-invariant forms, $\left( (\wedge^{\bullet,\bullet} X)^G, \overline\partial \right)$. The notion of $\del\delbar$-Lemma for orbifolds refers to the cohomological decomposition for the double complex $\left( (\wedge^{\bullet,\bullet} X)^G, \del, \delbar \right)$.
This result follows directly by the work of Jonas Stelzig and it will let us construct new examples of compact complex manifolds satisfying the $\partial\overline\partial$-Lemma, as resolutions of orbifolds obtained starting from compact quotients of solvable Lie groups.
(Here, by asking that $j^o \colon Z^o = Z / G \to X^o = X \slash G$ is a {\em suborbifold}, we mean that $Z$ is a $G$-invariant submanifold of $X$, and the embedding $j \colon Z \to X$ is $G$-equivariant.)

\begin{theorem}[see \cite{stelzig-doublecomplex}]\label{thm:orbifolds}
Let $X^o = X \slash G$ be a compact complex orbifold of complex dimension $n$, and $j^o \colon Z^o = Z / G \to X^o$ be a suborbifold of complex dimension $d$ and codimension $k:=n-d$, and consider $\tau^o \colon \tilde X_{Z^o} \to X^o$ the blow-up of $X^o$ along the centre $Z^o$.
If both $X^o$ and $Z^o$ satisfy the $\partial\overline\partial$-Lemma, then also $\tilde X_{Z^o}$ does satisfy the $\partial\overline\partial$-Lemma.
\end{theorem}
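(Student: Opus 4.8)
The plan is to reduce the orbifold statement to the smooth \emph{equivariant} statement, and then to exploit the exactness of the functor of taking $G$-invariants in characteristic zero. First I would set up the equivariant picture: write $X^o = X/G$ with $X$ a smooth compact complex manifold and $G$ a finite group acting by biholomorphisms, and $Z \subset X$ a smooth $G$-invariant submanifold with $Z^o = Z/G$. Since the blow-up is functorial in the ideal sheaf and $Z$ is $G$-stable, the $G$-action lifts to $\tilde X_Z = \mathrm{Bl}_Z X$ and commutes with $\tau$, so that $\tilde X_{Z^o} = \tilde X_Z/G$ and, for the exceptional divisor, $E^o = E/G = \mathbb{P}(N_{Z|X})/G$ (the normal bundle of a $G$-invariant submanifold is a $G$-equivariant bundle). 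Thus the whole blow-up datum $(X,Z,E,\tilde X_Z)$ is $G$-equivariant and descends to $(X^o,Z^o,E^o,\tilde X_{Z^o})$.

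Second, I would recall that the orbifold Dolbeault (resp. de Rham, Bott--Chern) cohomology of each quotient is computed by the corresponding complex of $G$-invariant forms, and that, since $G$ is finite and we work over $\mathbb{C}$, the Reynolds averaging operator $\frac{1}{|G|}\sum_{g\in G} g^*$ is an idempotent projector onto invariants commuting with $\partial$ and $\overline\partial$ (Maschke/semisimplicity of $\mathbb{C}[G]$). Hence $(-)^G$ is exact, cohomology of the invariant complex equals the invariant part of the cohomology, and $H^{\bullet,\bullet}_{\overline\partial}(X^o) = H^{\bullet,\bullet}_{\overline\partial}(X)^G$, likewise for $Z^o$, $E^o$, and $\tilde X_{Z^o}$. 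In particular the orbifold $\partial\overline\partial$-Lemma for $X^o$ (resp. $Z^o$) is exactly the $G$-equivariant $\partial\overline\partial$-Lemma on the invariant double complex $\left((\wedge^{\bullet,\bullet}X)^G,\partial,\overline\partial\right)$ (resp. on $Z$).

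Third — the substance — I would transport the argument of Theorem \ref{thm:blowup} to the level of $G$-invariant complexes, equivalently invoke Stelzig's blow-up formula. Every ingredient in that proof is natural, hence $G$-equivariant: the relative \v{C}ech--Dolbeault long exact sequences \eqref{lexcd}, the ($\overline\partial$-)Thom isomorphisms and the integration-along-the-fibres maps, the Gysin morphisms, and the pushout description of $H^{\bullet,\bullet}_{\overline\partial}(\tilde X_Z)$; applying the exact functor $(-)^G$ preserves the commutative diagram with exact rows and the pushout property. One also needs the orbifold analogue of Corollary \ref{cor:deldelbar-E}: since $E^o$ fibres over $Z^o$ with fibre $\mathbb{P}^{k-1}$ and $Z^o$ satisfies the orbifold $\partial\overline\partial$-Lemma, the Borel--Leray spectral sequence argument of Proposition \ref{prop:dolbeault-projectivized} runs verbatim on invariant forms (the fibres are untouched by the quotient and remain K\"ahler), so $E^o$ satisfies it too. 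Feeding ``$X^o,Z^o,E^o$ satisfy the $\partial\overline\partial$-Lemma'' into the $G$-invariant pushout diagram, the universal property produces the Hodge structure on $H^\bullet(\tilde X_{Z^o};\mathbb{C})$ exactly as in the Conclusion of the proof of Theorem \ref{thm:blowup}.

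The hard part will be the technical assumption \eqref{eq:technical} together with the holomorphically-contractible-neighbourhood hypothesis \eqref{htnbd}: these are what make the pushout argument of Theorem \ref{thm:blowup} go through, yet they are \emph{not} assumed in the orbifold statement. This is precisely why the cleaner route is to lean on Stelzig's double-complex formalism (Theorem \ref{thm:stelzig}, \cite{stelzig-doublecomplex}), which establishes the blow-up formula for an abstract bounded double complex of $\mathbb{C}$-vector spaces \emph{without} these extra hypotheses, via the decomposition into squares and zigzags. Applied to the bounded double complex $\left((\wedge^{\bullet,\bullet}X)^G,\partial,\overline\partial\right)$, it yields the orbifold $\partial\overline\partial$-Lemma directly; the only point left to verify is that his formula is natural enough to be taken $G$-invariantly, i.e.\ that the invariant double complex of the quotient blow-up is genuinely the ``blow-up'' of the invariant double complexes of $X^o$ and $Z^o$, which follows from the equivariance set up in the first step.
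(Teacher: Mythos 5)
Your proposal is correct and ultimately takes the same route as the paper: lift the $G$-action to $\tilde X_Z$ via the universal property of the blow-up, invoke Stelzig's $E_1$-quasi-isomorphism $\wedge^{\bullet,\bullet}\tilde X_Z\simeq_1 \wedge^{\bullet,\bullet}X\oplus\bigoplus_{j=1}^{k-1}\wedge^{\bullet,\bullet}Z$, and pass to $G$-invariants (exact since $G$ is finite and we work over $\mathbb C$). You correctly diagnose that the detour through an equivariant version of Theorem \ref{thm:blowup} would founder on the hypotheses \eqref{htnbd} and \eqref{eq:technical}, which is exactly why the paper bypasses it in favour of the double-complex formalism.
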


\begin{proof}
We first notice that $\tilde X_{Z^o}$ itself is a (possibly smooth) orbifold of global-quotient type. Indeed, by the universal property of blow-up, see {\itshape e.g.} \cite[page 604]{griffiths-harris}, the action $G \circlearrowleft X$ yields the action $G \circlearrowleft \tilde X_Z$ the blow-up of $X$ along $Z$.
The proof then follows by considering the {\em $E_1$-quasi-isomorphism} $\wedge^{\bullet,\bullet} \tilde X_Z \simeq_1 \wedge^{\bullet,\bullet} X \oplus \bigoplus_{j=1}^{k-1} \wedge^{\bullet-j,\bullet-j} Z$. This means that there is a morphism of double complexes that induces an isomorphism at the first page $E_1$ of the Fr\"olicher spectral sequence, that is, the Dolbeault cohomology, see \cite[Definition D]{stelzig-doublecomplex}. The fact that there is an $E_1$-quasi-isomorphism as above is \cite[Theorem 23]{stelzig-doublecomplex}, see also \cite{stelzig-blowup}.
Since the action of $G$ is compatible with the above morphism, we get also an $E_1$-quasi-isomorphism $(\wedge^{\bullet,\bullet} \tilde X_Z)^G \simeq_1 (\wedge^{\bullet,\bullet} X)^G \oplus \bigoplus_{j=1}^{k-1} (\wedge^{\bullet-j,\bullet-j} Z)^G$.
Recall that the Dolbeault and the de Rham cohomologies of the orbifold are computed as the cohomologies of the complex of $G$-invariant forms, as said above. Therefore the properties of Hodge decomposition for $X$ and $Z$ reflects on the property of Hodge decomposition for $\tilde X_Z$ by means of the above quasi-isomorphism.
\end{proof}

\begin{example}[resolution of an orbifold covered by the Iwasawa manifold]\label{example:global-quotient-iwasawa}
In this example, starting from a smooth compact complex manifold which does not satisfy the $\del\delbar$-lemma, we construct a simply-connected smooth compact complex manifold that does.

Consider the complex Heisenberg group
\[
G:=\left\lbrace
\left(\begin{matrix}
1 & z_1 & z_3\\
0 & 1 & z_2\\
0 & 0 &1
\end{matrix}\right)
\;:\; z_1,z_2,z_3\in\mathbb{C}\right\rbrace.
\]
It is a nilpotent Lie group, and it is endowed with a bi-invariant complex structure defined by the coframe of $(1,0)$-forms
$$ \varphi^1:=dz_1 , \quad \varphi^2:=dz_2 , \quad \varphi^3:=dz_3-z_1dz_2.$$
They have structure equations
$$ d \varphi^1 = 0 , \quad d\varphi^2 = 0 , \quad d\varphi^3 = - \varphi^1 \wedge \varphi^2 . $$

Let $\xi\neq1$ be a cubic root of the unity, and $\Lambda$ be the lattice generated by $1$ and $\xi$. Consider the subgroup $\Gamma$ in $G$
consisting of matrices with entries in $\Lambda$.
The compact quotient $M:=G/\Gamma$ is a holomorphically-parallelizable nilmanifold \cite{nakamura}. By \cite{nomizu, sakane}, the de Rham and Dolbeault cohomologies of $M$ are the same as the cohomologies of the Iwasawa manifold, which are computed for instance in \cite{schweitzer}.

We consider the following action of the finite group $\mathbb{Z}_3$ on $G$:
$$
\sigma \colon (z_1,z_2,z_3) \mapsto (\xi z_1,\xi z_2, \xi^2 z_3).
$$
It is easy to check that the action is linear, and since $\xi^2=-1-\xi$ then $\Gamma$ is $\sigma$-invariant. Therefore we get an action on the quotient $M$, and a complex space $M^o:=M/\left\langle\sigma\right\rangle$
with orbifold singularities.
The action on the global co-frame
of $(1,0)$-forms becomes
$$
\sigma^*(\varphi^1)=\xi\varphi^1,\quad
\sigma^*(\varphi^2)=\xi\varphi^2,\quad
\sigma^*(\varphi^3)=\xi^2\varphi^3.
$$

We compute the orbifold de Rham and Dolbeault cohomologies by taking the $\sigma$-invariant forms.
We have
\begin{eqnarray*}
\lefteqn{ \wedge^\bullet M^o= (\wedge^\bullet M)^{\langle \sigma\rangle} } \\
&=& \wedge \langle 1, \varphi^{13},
\varphi^{23},\varphi^{1\bar 1},\varphi^{1\bar 2},\varphi^{2\bar 1},
\varphi^{2\bar 2}, \varphi^{3\bar3}, \varphi^{\bar 1\bar 3},\varphi^{\bar 2\bar 3} , \varphi^{12\bar3}, \varphi^{3\bar1\bar2} \rangle
\end{eqnarray*}
as an algebra, with the only non-trivial differentials
$$ d \varphi^{3\bar3} = \varphi^{12\bar3}-\varphi^{3\bar1\bar2}, \quad d \varphi^{12\bar3} = d \varphi^{3\bar1\bar2} = \varphi^{12\bar1\bar2}. $$
It is straightforward to check that this complex satisfies the $\del\delbar$-Lemma, that is, the orbifold $M^o$ satisfies the
$\del\delbar$-lemma.

Now we resolve the singularities of $M^o$ in order to obtain a simply-connected
smooth compact complex manifold satisfying the $\del\delbar$-lemma.
The procedure is similar to the one described in \cite{fernandez-munoz}.
The singular locus of $M^o$ consists of $3^3$ isolated singular points.
By blowing-up each point, we get an exceptional divisor $\mathbb{CP}^2 / \mathbb Z_3$, where the action is given by
$$ \sigma \colon [z_1:z_2:z_3] \mapsto [z_1:z_2:\xi z_3] . $$
The singular locus consists now of the isolated point $q:=[0:0:1]$ and of the complex projective line $L:=\left\lbrace[z_1:z_2:0]\right\rbrace\subset \mathbb{C}\mathbb{P}^2$, which both admit a holomorphically contractible neighbourhood. Finally, by blowing-up the $q$'s and the $L$'s, we get a smooth model $\tilde M$. Thanks to Theorem \ref{thm:orbifolds}, the performed operations mantain the $\del\delbar$-Lemma property.

In fact, the same argument as \cite[Proposition 2.3]{fernandez-munoz} adapted to our manifold $M$, which is a principal $2$-torus bundle over a $4$-torus, yields that $\tilde M$ is simply-connected.
Moreover, the metric
$$ \omega := \frac{\sqrt{-1}}{2} \sum_{j=1}^3\varphi^j\wedge\bar\varphi^j $$
on $M$ is $\sigma$-invariant and so it descends to the orbifold $M^o$.
We can also obtain $\tilde M$ by blowing-up $M$ and then by quotienting by $\mathbb Z_3$. Therefore, $\omega$ yields a balanced metric on $\tilde M$ thanks to \cite{alessandrini-bassanelli}.

Finally, we notice that $\tilde M$ is not in class $\mathcal C$ of Fujiki, since $M$ is not.
\end{example}

Summarizing the contents of the last example:
\begin{theorem}\label{thm:examples-simply-connected}
There exists a simply-connected compact complex non-K\"ahler manifold $\tilde M$ such that: it is non-K\"ahler, in fact it does not belong to class $\mathcal C$ of Fujiki; it satisfies the $\del\delbar$-Lemma; and it is endowed with a balanced metric.
\end{theorem}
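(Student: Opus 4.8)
The plan is to take $\tilde M$ to be the explicit smooth model constructed in Example~\ref{example:global-quotient-iwasawa} and to verify its four asserted properties one by one; the statement is essentially a repackaging of facts already established there, so the work lies in assembling them coherently. Recall the construction: one starts from the holomorphically-parallelizable nilmanifold $M = G/\Gamma$, with $G$ the complex Heisenberg group, quotients by the order-three automorphism $\sigma$ to obtain the global-quotient orbifold $M^o = M/\langle\sigma\rangle$, and then resolves the isolated quotient singularities by a finite sequence of blow-ups with smooth centres, yielding the compact complex manifold $\tilde M$.

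For the $\del\delbar$-Lemma, I would first establish it on the orbifold $M^o$ directly, by computing the cohomology of the finite-dimensional complex of $\sigma$-invariant forms $(\wedge^{\bullet,\bullet} M)^{\langle\sigma\rangle}$ and checking the cohomological decomposition by hand. I would then observe that each step of the resolution is a blow-up along a centre --- an isolated point, or the line $L\cong\mathbb{P}^1$ inside the exceptional $\mathbb{P}^2/\mathbb{Z}_3$ --- admitting a holomorphically contractible neighbourhood, so that Theorem~\ref{thm:orbifolds} propagates the $\del\delbar$-Lemma from $M^o$ through the resolution up to $\tilde M$. For the balanced metric, I would note that the invariant Hermitian metric $\omega = \frac{\sqrt{-1}}{2}\sum_j \varphi^j\wedge\bar\varphi^j$ on $M$ is $\sigma$-invariant, hence descends to $M^o$; realizing $\tilde M$ alternatively as the $\mathbb{Z}_3$-quotient of a blow-up of $M$, the stability of the balanced condition under modifications \cite{alessandrini-bassanelli} yields a balanced metric on $\tilde M$. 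Finally, since the nilmanifold $M$ is not in class $\mathcal{C}$ of Fujiki, the same failure passes to $\tilde M$, exactly as argued in Example~\ref{example:global-quotient-iwasawa}, so that $\tilde M$ is in particular non-K\"ahler.

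The remaining, and genuinely delicate, point is simple connectivity, and this is where I expect the main obstacle to lie. My plan is to adapt the argument of \cite[Proposition~2.3]{fernandez-munoz}, exploiting that $M$ is a principal $2$-torus bundle over a $4$-torus. The hard part will be the fundamental-group bookkeeping, which proceeds in two stages. First, since the $\sigma$-action has fixed points, one obtains an epimorphism $\pi_1(M)\twoheadrightarrow\pi_1(M^o)$, and one must show its target is trivial by tracking the action of $\sigma$ on the fibre and base tori and invoking a Riemann--Hurwitz computation on the relevant invariant curves to see that their quotients are spheres. Second, since the resolution only replaces simply-connected neighbourhoods of the singular points by simply-connected exceptional pieces, a Seifert--van Kampen argument should give $\pi_1(\tilde M)\cong\pi_1(M^o)$, which by the first step is trivial. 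Making these two reductions interlock cleanly, rather than the individual cohomology computations, is the crux of the proof.
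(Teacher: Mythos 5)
Your proposal is correct and follows essentially the same route as the paper: the manifold $\tilde M$ is the one from Example~\ref{example:global-quotient-iwasawa}, the $\del\delbar$-Lemma is checked on the invariant complex of $M^o$ and propagated through the blow-ups via Theorem~\ref{thm:orbifolds}, the balanced metric descends from the $\sigma$-invariant $\omega$ using \cite{alessandrini-bassanelli}, the failure of class $\mathcal C$ is inherited from $M$, and simple connectivity is obtained exactly as you describe by adapting \cite[Proposition~2.3]{fernandez-munoz} (the paper carries out the analogous Bredon/Riemann--Hurwitz/Seifert--van Kampen bookkeeping in detail for the Nakamura example). No substantive differences.
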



\begin{thebibliography}{48}

\bibitem[ABST13]{abate-bracci-suwa-tovena}
M. Abate, F. Bracci, T. Suwa, F. Tovena, Localization of Atiyah classes, {\em Rev. Mat. Iberoam.} \textbf{29} (2013), 547--578.

\bibitem[AKMW02]{akmw}
D. Abramovich, K. Karu, K. Matsuki, J. W\l odarczyk, Torification and factorization of birational maps, {\em J. Amer. Math. Soc.} \textbf{15} (2002), no. 3, 531--572.

\bibitem[AB96]{alessandrini-bassanelli}
L. Alessandrini, G. Bassanelli, The class of compact balanced manifolds is invariant under modifications, {\em Complex analysis and geometry (Trento, 1993)}, 1--17, Lecture Notes in Pure and Appl. Math., \textbf{173}, Dekker, New York, 1996.

\bibitem[AK17a]{angella-kasuya-AGAG}
D. Angella, H. Kasuya, Bott-Chern cohomology of solvmanifolds, {\em Ann. Global Anal. Geom.} \textbf{52} (2017), no. 4, 363--411.

\bibitem[AK17b]{angella-kasuya-NWEJM}
D. Angella, H. Kasuya, Cohomologies of deformations of solvmanifolds and closedness of some properties, {\em North-West. Eur. J. Math.} \textbf{3} (2017), 75--105.

\bibitem[AT17]{angella-tardini} D. Angella, N. Tardini,
Quantitative and qualitative cohomological properties for non-K\"ahler
manifolds. {\em Proc. Amer. Math. Soc.} \textbf{145} (2017) no.~1, 273--285.

\bibitem[AT13]{angella-tomassini-3}
D. Angella, A. Tomassini, On the $\partial\overline\partial$-Lemma and Bott-Chern cohomology, {\em Invent. Math.} \textbf{192} (2013), no.~1, 71--81.

\bibitem[Bai54]{baily-PNAS}
W. L. Baily, On the quotient of an analytic manifold by a group of analytic homeomorphisms, {\em Proc. Nat. Acad. Sci. U. S. A.} \textbf{40} (1954), no. 9, 804--808.

\bibitem[Bai56]{baily-AJM}
W. L. Baily, The decomposition theorem for $V$-manifolds, {\em Amer. J. Math.} \textbf{78} (1956), no. 4, 862--888.

\bibitem[BFM14]{bazzoni-fernandez-munoz}
G. Bazzoni, M. Fern\'andez, V. Mu\~noz, A $6$-dimensional simply connected complex and symplectic manifold with no K\"ahler metric, {\em J. Symplectic Geom.} \textbf{16} (2018), no. 4, 1001--1020.

\bibitem[Bla56]{blanchard}
A. Blanchard,
Sur les vari\'et\'es analytiques complexes,
{\em Ann. Sci.  l'\'E.N.S.} \textbf{73} (1956), no 2, 157--202.

\bibitem[Buc99]{buchdahl}
N. Buchdahl, On compact K\"ahler surfaces, {\em Ann. Inst. Fourier (Grenoble)} \textbf{49} (1999), no. 1, vii, xi, 287--302.

\bibitem[Cam91]{campana}
F. Campana, The class $\mathcal C$ is not stable by small deformations, {\em Math. Ann.} \textbf{290} (1991), no. 1,
19--30.

\bibitem[COUV16]{couv}
M. Ceballos, A. Otal, L. Ugarte, R. Villacampa, Invariant complex structures on $6$-nilmanifolds: classification, Fr\"olicher spectral sequence and special Hermitian metrics, {\em J. Geom. Anal.} \textbf{26} (2016), no. 1, 252--286.

\bibitem[CFGU00]{cordero-fernandez-gray-ugarte}
L.A. Cordero, M. Fern\'andez, A. Gray and L. Ugarte,
Compact nilmanifolds with nilpotent complex structures: Dolbeault cohomology,  {\em Trans. Amer. Math. Soc.} \textbf{352} (2000), no. 12, 5405--5433.

\bibitem[DGMS75]{deligne-griffiths-morgan-sullivan}
P. Deligne, Ph.~A. Griffiths, J. Morgan, D.~P. Sullivan, Real homotopy theory of K\"ahler manifolds, {\em Invent. Math.} \textbf{29} (1975), no.~3, 245--274.

\bibitem[FM08]{fernandez-munoz}
M. Fern\'andez, V. Mu\~noz,
An $8$-dimensional nonformal, simply connected, symplectic manifold,
{\em Ann. of Math. (2)} \textbf{167} (2008), no. 3, 1045--1054.

\bibitem[Fri91]{friedman}
R. Friedman, On threefolds with trivial canonical bundle, {\em Complex geometry and Lie theory (Sundance, UT, 1989)}, 103--134,  Proc. Sympos. Pure Math., 53, Amer. Math. Soc., Providence, RI, 1991.

\bibitem[Fri17]{friedman-arxiv1708}
R. Friedman, The $\partial\overline\partial$-lemma for general Clemens manifolds, \texttt{arXiv:1708.00828}	.

\bibitem[Fr\"o55]{frolicher}
A. Fr\"olicher, Relations between the cohomology groups of Dolbeault and topological invariants, {\em Proc. Nat. Acad. Sci.
U.S.A.} \textbf{41} (1955), 641--644.

\bibitem[Ful84]{fulton}
W. Fulton, {\em Intersection theory}, Second edition, Ergebnisse der Mathematik und ihrer Grenzgebiete, \textbf{3}, Folge, A Series of Modern Surveys in Mathematics, \textbf{2}, Springer-Verlag, Berlin, 1998. 

\bibitem[GH78]{griffiths-harris}
Ph. Griffiths, J. Harris, {\em Principles of algebraic geometry}, Reprint of the 1978 original, Wiley Classics Library, John Wiley \& Sons, Inc., New York, 1994.

\bibitem[Has89]{hasegawa}
K. Hasegawa, Minimal models of nilmanifolds, {\em Proc. Am. Math. Soc.} \textbf{106} (1989), no. 1, 65--71.

\bibitem[Hir62]{hironaka}
H. Hironaka, An example of a non-K\"ahlerian complex-analytic deformation of K\"ahlerian complex structures, {\em Ann. Math. (2)} \textbf{75} (1962), no. 1, 190--208.

\bibitem[HIS18]{honda-izawa-suwa}
N. Honda, T. Izawa, T. Suwa, Sato hyperfunctions via relative Dolbeault cohomology, \texttt{arXiv:1807.01831 [math.CV]}, 2018

\bibitem[Kas13b]{kasuya-JGP}
H. Kasuya, Hodge symmetry and decomposition on non-K\"ahler solvmanifolds, {\em J. Geom. Phys.} \textbf{76} (2014), 61--65. 

\bibitem[Lam99]{lamari}
A. Lamari, Courants k\"ahl\'eriens et surfaces compactes, {\em Ann. Inst. Fourier (Grenoble)} \textbf{49} (1999), no. 1, vii, x, 263--285.

\bibitem[LP92]{lebrun-poon}
C. LeBrun, Y.S. Poon, Twistors, K\"ahler manifolds, and bimeromorphic geometry. II, {\em J. Am. Math. Soc.} \textbf{5} (1992), no. 2, 317--325.

\bibitem[Nak75]{nakamura}
I. Nakamura, Complex parallelisable manifolds and their small deformations, {\em J. Differential Geometry} \textbf{10} (1975), 85--112.

\bibitem[Nom54]{nomizu}
K. Nomizu, On the cohomology of compact homogeneous spaces of nilpotent Lie groups, {\em Ann. of Math. (2)} \textbf{59} (1954), no.~3, 531--538.

\bibitem[Pop13]{popovici-JGA}
D. Popovici, Stability of strongly Gauduchon manifolds under modifications, {\em J. Geom. Anal.} \textbf{23} (2013), no. 2, 653--659.

\bibitem[Pop15a]{popovici-BSMF}
D. Popovici, Aeppli cohomology classes associated with Gauduchon metrics on compact complex manifolds, {\em Bull. Soc. Math. France} \textbf{143} (2015), no. 4, 763--800.

\bibitem[Pop15b]{popovici}
D. Popovici, Volume and self-intersection of differences of two nef classes, {\em Ann. Sc. Norm. Super. Pisa Cl. Sci. (5)} \textbf{17} (2017), no. 4, 1255--1299.

\bibitem[RYY17]{rao-yang-yang}
S. Rao, S. Yang, X. Yang, Dolbeault cohomologies of blowing up complex manifolds, {\em J. Math. Pures Appl. (9)} \textbf{130} (2019), 68--92.

\bibitem[Sak76]{sakane}
Y. Sakane, On compact complex parallelisable solvmanifolds, {\em Osaka J. Math.} \textbf{13} (1976), no. 1, 187--212.

\bibitem[Sat56]{satake}
I. Satake, On a generalization of the notion of manifold, {\em Proc. Nat. Acad. Sci. U.S.A.} {\bfseries 42} (1956), no.~6, 359--363.

\bibitem[Sch07]{schweitzer}
M. Schweitzer, Autour de la cohomologie de Bott-Chern, Pr\'epublication de l'Institut Fourier no.~703 (2007), \texttt{arXiv:0709.3528}.

\bibitem[Ste18a]{stelzig-thesis}
J. Stelzig, {\em Double Complexes and Mixed Hodge Structures as Vector Bundles}, PhD thesis, WWU, M\"unster, 2018.

\bibitem[Ste18b]{stelzig-blowup}
J. Stelzig, The Double Complex of a Blow-up, to appear in {\em Internat. Math. Res. Not. IMRN}, \textsc{doi}: 10.1093/imrn/rnz139\texttt{arXiv:1808.02882}.

\bibitem[Ste18c]{stelzig-doublecomplex}
J. Stelzig, On the Structure of Double Complexes, \texttt{arXiv:1812.00865}.

\bibitem[Suw98]{suwa-book}
T. Suwa, {\em Indices of vector fields and residues of singular holomorphic foliations}, Actualit\'es Math\'ematiques, Hermann, Paris, 1998.

\bibitem[Suw08]{suwa-intersection} T. Suwa, Residue theoretical approach to intersection theory, {\em Contemp. Math. Amer. Math. Soc.} \textbf{459} (2008), 207--261.

\bibitem[Suw09]{suwa-ASPM}
T. Suwa, \v{C}ech-Dolbeault cohomology and the $\overline\partial$-Thom class, {\em Singularities---Niigata---Toyama 2007}, 321--340, Adv. Stud. Pure Math., \textbf{56}, Math. Soc. Japan, Tokyo, 2009.

\bibitem[Suw19]{suwa-relD} T. Suwa, Relative Dolbeault cohomology,
\texttt{arXiv:1903.04710}.

\bibitem[Tar19]{tardini}
N. Tardini, Relative \v{C}ech-Dolbeault homology and applications, \texttt{arXiv:1812.00362}, to appear in \emph{Ann. Mat. Pura Appl}.

\bibitem[TW13]{tosatti-weinkove-Crelle}
V. Tosatti, B. Weinkove, Hermitian metrics, $(n-1, n-1)$ forms and Monge-Amp\`ere equations, {\em J. Reine Angew. Math.} \textbf{755} (2019), 67--101

\bibitem[Voi02]{voisin-1}
C. Voisin, {\em Hodge theory and complex algebraic geometry. I}, Translated from the French by Leila Schneps, Reprint of the 2002 English edition, Cambridge Studies in Advanced Mathematics, 76, Cambridge University Press, Cambridge, 2007.

\bibitem[Vul12]{vuli-cras}
V. Vuletescu, Exemples de faisceaux coh\'erents sans r\'esolution localement libre en dimension $3$, {\em C. R. Math. Acad. Sci. Paris} \textbf{350} (2012), no. 7-8, 411--412.

\bibitem[Wlo03]{wlodarczyk}
J. W\l odarczyk, Toroidal varieties and the weak factorization theorem, {\em Invent. Math.} \textbf{154} (2003), no. 2, 223--331.

\bibitem[YY17]{yang-yang}
S. Yang, X. Yang, Bott-Chern blow-up formula and bimeromorphic invariance of the $\partial\overline\partial$-Lemma for threefolds, \texttt{arXiv:1712.08901}.

\bibitem[Wel74]{wells}
R. O. Wells, Comparison of de Rham and Dolbeault cohomology for proper surjective mappings, {\em Pacific J. Math.} \textbf{53} (1974), 281--300.

\bibitem[Wu06]{wu}
C.-C. Wu, On the geometry of superstrings with torsion, Thesis (Ph.D.)Harvard University, Proquest LLC, Ann Arbor, MI, 2006.

\end{thebibliography}
\end{document}